\theoremstyle{plain}
 \newtheorem{thm}{Theorem}[section]
 \newtheorem{prop}{Proposition}[section]
 \newtheorem{lem}{Lemma}[section]
 \newtheorem{claim}{Claim}[section]
\theoremstyle{definition}
 \newtheorem{dfn}{Definition}[section]
\theoremstyle{remark}
 \newtheorem{rem}{Remark}[section] 
 \numberwithin{equation}{section}
\renewcommand{\setminus}{\smallsetminus}
\newcommand{\pos}{\mathrm{pos}}
\newcommand{\supp}{\mathrm{supp} \, }
\title[On the interior of projections of planar self-similar sets]
{On the interior of projections of planar self-similar sets}
\author[Y.\ Takahashi]{YUKI TAKAHASHI}
\address{Department of Mathematics, Bar-Ilan University, Ramat-Gan, 5290002, Israel}
\email{takahashi@math.biu.ac.il}
\thanks{Y.\ T. \ was supported by the Israel Science Foundation grant 396/15 (PI: B.\ Solomyak).}
\date{today}
\begin{document}

\vspace{18mm}
\setcounter{page}{1}
\thispagestyle{empty}

\begin{abstract}
We consider projections of planar self-similar sets, and show that one can create nonempty interior  
in the projections by applying arbitrary small perturbations, 
if the self-similar set satisfies the open set condition and has Hausdorff dimension greater than $1$. 
\end{abstract}

\maketitle

\section{Introduction and main results}
\subsection{Planar self-similar sets and their projections}
The study of orthogonal projections of fractal sets has a long history, 
dating back to Marstrand's projection theorem \cite{Marstrand}. 
Ever since, it has been considered in many papers and in many different settings 
(e.g. \cite{Ber}, \cite{Eroglu}, \cite{Falconer2}, \cite{Farkas}, \cite{Ferguson}, \cite{Hochman2}, \cite{Hu}, \cite{Kenyon}, \cite{SS}). 
For a broad view of the subject, the reader is referred to the survey \cite{Falconer}. 
For the recent developments of the study, see \cite{Shmerkin}. 

Recall that, an \emph{Iterated Function System} (IFS) on $\mathbb{R}^n$ is a finite collection 
$\mathcal{F} = \{ f_a \}_{a \in \mathcal{A}}$ 
of strictly contractive self-maps of $\mathbb{R}^n$. It is well known that for any such IFS there exists a 
unique nonempty compact set $K$, called the \emph{attractor}, such that 
\begin{equation*}
K = \bigcup_{a \in \mathcal{A}} f_a(K). 
\end{equation*} 
When the maps are similarities, the set $K$ is called a \emph{self-similar set} 
(we will later modify the definition of self-similar sets slightly). 
The \emph{similarity dimension} 
of an IFS $\mathcal{F} = \{ f_a \}_{a \in \mathcal{A}}$ is the unique solution $d > 0$ of 
\begin{equation*}
\sum_{a \in \mathcal{A}} r_a^d = 1, 
\end{equation*}
where $r_a > 0$ is the contraction ratio of $f_a$. 
Recall that the IFS $\mathcal{F} = \{ f_a \}_{a \in \mathcal{A}}$ 
satisfies the \emph{open set condition (OSC)} if there exists a nonempty open set $O$ 
such that $f_a O \subset O$ for all $a \in \mathcal{A}$, and the images $f_a(O)$ are pairwise disjoint.  
It is well known that Hausdorff dimension and similarity dimension agree whenever OSC holds.

In this paper we consider projections of planar self-similar sets. 
Our interest in this paper is the case that projections have nonempty interior. 
Since projection does not increase Hausdorff dimension, we can restrict our consideration to the case that 
self-similar sets have Hausdorff dimension greater than $1$. 
In \cite{Marstrand}, Marstrand showed that for any planar Borel set with Hausdorff dimension greater than $1$, 
for a.e. directions the projections have positive Lebesgue measure. Recently, P. Shmerkin and B. Solomyak 
showed that for any planar self-similar set with Hausdorff dimension greater than $1$, 
the projections have positive Lebesgue measure except for the directions that have Hausdorff dimension $0$ \cite{SS}. 

However, very few results are known concerning the nonempty interior of projections of planar self-similar sets. 
It is natural to expect that for any planar self-similar set with Hausdorff dimension greater than $1$, 
one can create nonempty interior in the projections by 
applying arbitrary small perturbations to the generating contracting similarities, 
but this problem is known to be extremely hard. 
In this paper we consider weaker form of this question. 
We allow more freedom to the perturbation, 
and show that by applying this perturbation one can create nonempty interior 
in the projections. 
The basic idea of the proof is borrowed from \cite{Moreira3} and \cite{Takahashi}. 
In \cite{Takahashi}, by relying on the techniques invented by Moreira and Yoccoz in \cite{Moreira3}, 
the author showed that for any planar self-similar set 
one can create nonempty interior in a projection by applying arbitrary small perturbations,  
if the projection of a uniform self-similar measure has $L^2$-density and the generating contracting similarities 
are homotheties ($f: \mathbb{R}^2 \to \mathbb{R}^2$ is a homothety if 
$f(x) = r x + t$ for some $r \in (0, 1)$ and $t \in \mathbb{R}^2$).  
This paper can be considered as an extension of \cite{Takahashi}. 
Additional complication comes from the fact that the contracting similarities are no longer homotheties.

\subsection{Main results}\label{section2}

As explained in the introduction, self-similar set is normally defined as a set $K$ 
together with a set of contracting similarities that generates $K$. 
From below we modify the definition of self-similar sets to be the following: 

\begin{dfn}\label{selfsimilar}
A set $K \subset \mathbb{R}^2$ is \emph{self-similar} if the following holds: 
there exists a finite alphabet $\mathcal{A}$ and a set of contracting similarities 
$\mathcal{F} = \{ f_a \}_{a \in \mathcal{A}}$ 
on $\mathbb{R}^2$ 
such that 
\begin{equation*}
K = \displaystyle{ \bigcup_{a \in \mathcal{A}} f_a(K) }.
\end{equation*} 
\end{dfn}
Write $I = [0, 1]^2$. 
Without loss of generality, we can further assume that $K \subset I$. 

\begin{dfn}
Let $K$ be a self-similar set. 
We say that $K$ satisfies the \emph{OSC} 
if there exists a set of contracting similarities $\mathcal{F} = \{ f_a \}_{a \in \mathcal{A}}$ that generates $K$ 
and satisfies the OSC. 
\end{dfn}

\begin{dfn}
Let $K, \widetilde{K}$ be self-similar sets. We say that $K$ and $\widetilde{K}$ are \emph{$\epsilon$-close} 
if the following holds: 
there exist 
sets of contracting similarities $\mathcal{F} = \{f_a\}_{a \in \mathcal{A}}$
 (resp. $\widetilde{\mathcal{F}} = \{\tilde{f}_{\tilde{a}}\}_{\tilde{a} \in \widetilde{ \mathcal{A} } }$) 
that generates $K$ (resp. $\widetilde{K}$) such that 
\begin{itemize}
\item[(i)] $\mathcal{A} = \widetilde{\mathcal{A}}$; 
\item[(ii)] $r_a  = \tilde{r}_a$ for all $a \in \mathcal{A}$;
\item[(iii)] $\| f_a - \tilde{f}_{a} \|_{I} / r_a < \epsilon$ for all $a \in \mathcal{A}$, 
\end{itemize}
where $\| f_a -\tilde{f}_a \|_{I} = \sup \{ | f_a(x) - \tilde{f}_a(x) | : x \in I \}$. 
\end{dfn}

Let $\ell_{\theta} \ni 0$ be the line that makes the angle $\theta + \pi/2$ with the $x$-axis, 
and let $\Pi_{\theta}$ be the projection onto $\ell_{\theta}$. 
Our main result is the following:

\begin{thm}\label{thm1}
Let $K$ be a self-similar set. Assume that $K$ satisfies the OSC and 
has Hausdorff dimension greater than $1$. 
Then, for every $\epsilon > 0$ there exists an open set $E \subset [0, \pi)$ and a self-similar set 
$\widetilde{K}$ such that 
\begin{itemize}
\item[(i)] $\left| [0, \pi) \setminus E \right| < \epsilon$; 
\item[(ii)] $\widetilde{K}$ is $\epsilon$-close to $K$; 
\item[(iii)] $\Pi_{\theta} \widetilde{K}$ contains an interval for all $\theta \in E$. 
\end{itemize}
\end{thm}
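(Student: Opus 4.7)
I would adapt the Moreira--Yoccoz recurrent-compact-set technique used in the homothety case of \cite{Takahashi} to the present setting of general contracting similarities. Writing $f_a(x) = r_a O_{\alpha_a} x + t_a$ with $O_\alpha$ the rotation of angle $\alpha$, the identity
\[
\Pi_{\theta} f_a(K) \;=\; r_a \, \Pi_{\theta - \alpha_a}(K) + \Pi_\theta(t_a)
\]
realizes the family $\{\Pi_\theta K\}_{\theta \in [0,\pi)}$ as the attractor of a graph-directed self-similar IFS on $\mathbb{R}$, whose states are the residues of $\theta - \alpha_{\mathbf{a}}$ modulo $\pi$. Since $\dim K > 1$ and OSC holds, passing to a high iterate $\mathcal{F}^N$ gives $\sum_{\mathbf{a} \in \mathcal{A}^N} r_{\mathbf{a}}$ arbitrarily large, and I would extract a subsystem $\mathcal{B} \subset \mathcal{A}^N$ whose composite rotation angles cluster near a finite set of values, or near $0$ (after an arbitrarily small angular perturbation, absorbed into the $\epsilon$-budget) when the subgroup generated by the $\alpha_a$'s is dense in the circle. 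This reduces the direction-dependent graph-directed system to a finite-state system.

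For each $\theta$ in a prescribed open set I would then perturb the translations $\tilde t_a$ within the allowed $\epsilon$-ball so that, for the IFS on $\mathbb{R}$ generating $\Pi_\theta \widetilde K$, there exists a finite open set $U \subset \mathbb{R}$ whose image under the projected cylinders indexed by $\mathbf{a}\in\mathcal{B}$ covers $U$ in an overlapping way that is stable under iteration --- a Moreira--Yoccoz recurrent compact configuration. This forces $\Pi_\theta \widetilde K \supset U$, hence contains an interval. The $L^2$-density hypothesis of \cite{Takahashi} would be removed by combining the Shmerkin--Solomyak theorem, which gives $|\Pi_\theta K| > 0$ outside a set of Hausdorff dimension zero, with a convolution/concatenation argument on long cylinders: independent halves of a long word provide an approximate self-convolution of projected uniform measure, sufficiently regular that recurrent compactness can be engineered by a translation perturbation of size $< \epsilon$.

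Third, I would upgrade the pointwise statement to the claimed open set $E$ with $\left|[0,\pi) \setminus E\right| < \epsilon$ by noting that the recurrent compact property is open in $\theta$, since the projected cylinders vary continuously in $\theta$. Covering a positive measure set of good directions by finitely many such neighborhoods, and controlling the complement via the Shmerkin--Solomyak exceptional set plus a small error, yields the required measure bound. \emph{The main obstacle} is the angular coupling introduced by the rotations: in \cite{Takahashi} each $\theta$ yields a genuine self-similar IFS on $\mathbb{R}$, and translations can be tuned direction by direction, but here the graph-directed system links all directions through the shifts $\theta \mapsto \theta - \alpha_a$. Finding a single perturbation $\widetilde{\mathcal{F}}$ that simultaneously produces recurrent compact configurations for a large measure set of $\theta$'s, while respecting $\|f_a - \tilde f_a\|_I / r_a < \epsilon$ and the common-ratio constraint $\tilde r_a = r_a$, is the principal technical challenge I expect.
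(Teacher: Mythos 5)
Your plan correctly identifies the Moreira--Yoccoz recurrence framework and the projection identity $\Pi_\theta f_a(K) = r_a \Pi_{\theta-\alpha_a}(K) + \Pi_\theta(t_a)$, but it stalls exactly at the step you yourself flag as the principal technical challenge: producing a \emph{single} perturbation $\widetilde{\mathcal{F}}$ whose recurrent configuration works simultaneously for a set of directions of measure $\pi-\epsilon$. Tuning the translations direction by direction and then invoking openness of the recurrent-compact property cannot close this gap: you have only finitely many perturbation parameters, the neighborhoods supplied by openness carry no uniform lower bound on their size, and the translation adjustments required at different directions will in general conflict with one another. This is not a technicality to be deferred; it is the heart of the theorem, and as written your proposal does not contain a mechanism for it. The preliminary reduction you suggest --- extracting a subsystem of $\mathcal{A}^N$ whose composite rotation angles cluster near a finite set so as to obtain a finite-state graph-directed system --- is likewise not needed and would itself require justification, since perturbing each $\alpha_a$ by at most $\epsilon$ only moves an $N$-fold composite angle by $O(N\epsilon)$ and changes the system you are studying.

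The paper resolves the simultaneity problem by abandoning the direction-by-direction viewpoint entirely. The recurrent set is a subset $\mathcal{L}$ of the two-dimensional space $Q$ of lines, parametrized by $(\theta,t)$, and the renormalization operators $T_a(u)=f_a^{-1}(u)$ act on this space, moving the direction coordinate together with the position coordinate; no finite-state reduction is made. The perturbation is chosen \emph{probabilistically}: half of the maps receive independent random rotations in $(-\epsilon,\epsilon)$ and random translations of size $c_1\rho$, and the key estimate (Proposition \ref{key_lem}) shows that for each line $u$ in a thickening of $\mathcal{L}$ the probability that no two-step renormalization returns $u$ to $\mathcal{L}^0$ is at most $\exp(-c_2\rho^{-\frac{1}{2}(d-1)})$. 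A $\rho^{5/2}$-dense subset of $\mathcal{L}^1$ has only $O(\rho^{-5})$ points, so a union bound produces one parameter $\underline{\omega}_0$ that works for all of them at once, and a continuity argument upgrades this to all of $\mathcal{L}$. Note also that the $L^2$-density hypothesis is not removed via Shmerkin--Solomyak plus a convolution argument as you propose: the paper simply uses Kaufman's estimate $\int_{[0,\pi)}\|\chi_\theta\|_{L^2}^2\,d\theta<c_4$ and takes $E$ to be the set of directions with $\|\chi_\theta\|_{L^2}^2<c_5$, discarding a set of measure less than $\epsilon/2$ --- which suffices because the exceptional set need only be small in measure, not of Hausdorff dimension zero.
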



\begin{dfn}
A set $K \subset \mathbb{R}^2$ is \emph{h-self-similar} if the following holds: 
there exists a finite alphabet $\mathcal{A}$ and a set of contracting homotheties  
$\mathcal{F} = \{ f_a \}_{a \in \mathcal{A}}$ 
on $\mathbb{R}^2$ 
such that 
\begin{equation*}
K = \displaystyle{ \bigcup_{a \in \mathcal{A}} f_a(K) }.
\end{equation*} 
\end{dfn}
In \cite{Takahashi}, the author proved the following: 
\begin{thm}\label{mukashi}
Let $K$ be a h-self-similar set. 
Assume that $K$ satisfies the OSC and has Hausdorff dimension greater than $1$.  
Then, for a.e. $\theta \in [0, \pi)$ we have the following: 
for every $\epsilon > 0$ there exists a h-self-similar set 
$\widetilde{K}$ such that 
\begin{itemize}
\item[(i)] $\widetilde{K}$ is $\epsilon$-close to $K$; 
\item[(ii)] $\Pi_{\theta} \widetilde{K}$ contains an interval.  
\end{itemize}
\end{thm}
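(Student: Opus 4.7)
My plan is to work entirely inside the projected IFS on the line $\ell_\theta$. Because every generator $f_a$ is a homothety $x\mapsto r_a x + t_a$, its projection $\Pi_\theta f_a$ acts on $\ell_\theta\cong\mathbb{R}$ as the homothety $u\mapsto r_a u+\Pi_\theta t_a$, so $\Pi_\theta K$ is the attractor of a one-dimensional self-similar IFS $\{\phi_{a,\theta}\}_{a\in\mathcal A}$ whose contraction ratios agree with those of $\mathcal F$. An $\epsilon$-close perturbation of the translations $t_a$ in the plane descends to a translation perturbation of $\Pi_\theta t_a$ of the same size, and the OSC and similarity dimension are preserved. Hence it suffices to prove the following 1D statement: for a.e.\ $\theta$, arbitrarily small translation perturbations of $\{\phi_{a,\theta}\}$ produce an attractor containing an interval. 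To select such $\theta$, consider the uniform self-similar measure $\mu$ with weights $p_a=r_a^d$, where $d=\dim_H K>1$. Then $\mu$ has finite $s$-energy for every $s<d$, and a standard Kaufman/Mattila potential-theoretic argument shows that for a.e.\ $\theta$ the push-forward $\Pi_\theta\mu$ is absolutely continuous with an $L^2$ density. Fix such a generic $\theta$; the $L^2$ norm of $\Pi_\theta\mu$ furnishes the quantitative ``thickness'' needed below.

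\textbf{Recurrent compact set strategy.} Following Moreira-Yoccoz, I would encode local configurations of the projected IFS by renormalization. A configuration is a finite family of level-$n$ cylinders $\phi_w([0,1])\subset\ell_\theta$ affinely rescaled to a fixed reference interval, decorated with the measure $\Pi_\theta\mu$ restricted to them. The goal is to construct a compact family $\mathcal R$ of such configurations with two properties: (i) starting from the perturbed IFS, some finite iterate of the projected dynamics lands inside $\mathcal R$; (ii) $\mathcal R$ is forward-invariant under the zoom-in dynamics of substituting cylinders. Combined with a uniform lower bound on how much an element of $\mathcal R$ covers the reference interval, recurrence then propagates this coverage to every scale and forces the attractor of the perturbed IFS to contain an interval of $\ell_\theta$. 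The $L^2$ hypothesis on $\Pi_\theta\mu$ is what makes $\mathcal R$ non-empty: it guarantees that at many renormalization scales the cylinders project onto a densely overlapping, well-distributed pattern rather than a Cantor-like gap structure.

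\textbf{Bringing the perturbation into $\mathcal R$, and the main obstacle.} To realize condition (i) I would perturb the translations $t_a$ transversally. The $L^2$ bound on $\Pi_\theta\mu$, via a standard large-deviations/ergodic counting at some scale $n_0$, supplies a sequence of cylinders whose images in $\ell_\theta$ already overlap heavily; a small translation perturbation then redistributes their endpoints enough to turn the heavy overlap into an actual covering of a fixed interval $J$ with a uniform margin. The existence of such a perturbation with norm $<\epsilon$ is obtained by a Fubini/transversality argument on the parameter space of translations, in the spirit of \cite{Moreira3} and \cite{Takahashi}. Once $J$ is covered with margin, every sub-cylinder inherits the same covering at its own scale, verifying (ii) automatically. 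The hard part is turning the $L^2$ information into a quantitative overlap at a controlled finite scale $n_0$ while keeping the perturbation within $\epsilon$ and maintaining transversality in the translation parameters; this is where the Moreira-Yoccoz machinery is genuinely used. The remaining ingredients -- the reduction to 1D, the selection of generic $\theta$, and the propagation through $\mathcal R$ -- are relatively routine once the key perturbation step is in place.
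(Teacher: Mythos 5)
First, a point of context: the paper does not prove Theorem \ref{mukashi} at all --- it is quoted from \cite{Takahashi} --- so the only available comparison is with that reference's method, which is the same Moreira--Yoccoz scheme the present paper uses for the more general Theorem \ref{thm1}. Your skeleton (reduction to a one-dimensional self-similar IFS via the homothety structure, selection of a.e.\ $\theta$ through Kaufman's $L^2$-density bound for $\Pi_\theta\mu$, a renormalization/recurrence argument, and a Fubini-type choice of perturbation) matches that scheme. The reduction to 1D and the choice of generic $\theta$ are fine and are exactly what Remark 1 and Section \ref{Marstrand} of the paper record.

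The genuine gap is in your central step: the claim that ``a small translation perturbation then redistributes their endpoints enough to turn the heavy overlap into an actual covering of a fixed interval $J$ with a uniform margin,'' after which recurrence is ``automatic.'' Under the paper's notion of $\epsilon$-closeness, each cylinder map may be translated by at most $\epsilon\, r_w$, i.e.\ by an $\epsilon$-fraction of that cylinder's own length; consequently no gap of definite \emph{relative} size between projected cylinders can be closed deterministically at any single scale $n_0$. The $L^2$ bound on $\chi_\theta$ controls averages (it bounds the number of ``bad'' words whose projections pile up, as in Lemma \ref{bad_good}), but it is perfectly consistent with $\Pi_\theta K$ being nowhere dense at every scale --- indeed, if covering at one scale were achievable the theorem would need no recurrent-set machinery at all. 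What the actual argument proves is strictly weaker and of a different logical shape: one builds a core set $L(\theta)$ of relative positions whose measure is only bounded below by a constant $c_8$ (not full), shows that each position in a finite $\rho^{5/2}$-dense net fails to admit \emph{some} returning word with probability at most $\exp(-c_2\rho^{-(d-1)/2})$ --- the exponential coming from the $N\approx\rho^{-\frac12(d-1)}$ independent good first-level words that the $L^2$ estimate supplies --- and then union-bounds over the net to pick one perturbation working for all positions simultaneously. This is an ``every configuration admits a good substitution'' recurrence, not forward invariance or covering, and the interval in $\Pi_\theta\widetilde K$ comes from thickening the recurrent set, not from a covered interval $J$. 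Your sketch explicitly defers this step to ``the Moreira--Yoccoz machinery,'' but the deterministic covering shortcut you substitute for it would fail, so the proposal as written does not close.
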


Theorem \ref{thm1} is an extension of Theorem \ref{mukashi}. 
In Theorem \ref{thm1} the set $K$ is a general self-similar set (not necessarily h-self-similar), and 
furthermore, 
projections of the perturbed set have nonempty interior in ``most of the directions", while 
in Theorem \ref{mukashi} 
projection of the perturbed set has nonempty interior only in one particular direction.

\begin{rem}
Let $K$ be a h-self-similar set and $\{ f_a \}_{a \in \mathcal{A}}$ be a generating homotheties. 
Write $f_a(x) = r_a x + t_a$. 
Then it is easy to see that for any $\theta \in [0, \pi)$, $\Pi_{\theta} K$ is a self-similar set generated by 
$\{ r_a x + \Pi_{\theta} t_a \}_{a \in \mathcal{A}}$. 
Therefore, considering projections of h-self-similar sets is equivalent to considering one-dimensional self-similar sets. 
In \cite{Takahashi}, Theorem \ref{mukashi} is stated in terms of one-dimensional self-similar sets (with overlaps). 
If $K$ is not h-self-somilar then a projection of $K$ is not necessarily self-similar. 
\end{rem}

\begin{rem}
The a.e. directions in Theorem \ref{mukashi} is the directions such that 
the projections of uniform self-similar measure have $L^2$-density. See section \ref{Marstrand}. 
\end{rem}

\subsection{Structure of the paper}
In section \ref{recurrence} we define renormalization operators and recurrent sets. 
The outline of the proof of Theorem \ref{thm1} is given in section \ref{outline}. 
In section \ref{construct} we will construct the set $\mathcal{L}$ 
which is the candidate of a recurrent set.  
The main difference of the proof between \cite{Takahashi} is in section \ref{construct}. 
In section \ref{key_prop} we will prove the key proposition, 
which roughly claims that with ``very high probability" any point in the set $\mathcal{L}$  
can return to itself by an action of a renormalization operator.

\section{Projections of planar self-similar sets and recurrent sets}\label{recurrence}

\subsection{Renormalization}
For any $\theta \in [0, \pi)$, we parametrize the line $\ell_{\theta}$ by 
\begin{equation*}
\begin{aligned}
\mathbb{R} &\to \ell_{\theta} \\
t &\mapsto t 
\begin{pmatrix}
-\sin \theta \\
\cos \theta 
\end{pmatrix}. 
\end{aligned}
\end{equation*}
We use this identification freely. 
Throughout this section, we fix a self-similar set $K \subset I$   
and a set of contracting similarities $\mathcal{F} = \{ f_a \}_{a \in \mathcal{A}}$ that generates $K$.  
Let $Q$ be the set of all lines in $\mathbb{R}^2$. 
For $u \in Q$, we denote by $\arg u \in [0, \pi)$ the angle that $u$ makes with the $x$-axis. 
Write $u \cap \ell_{\arg u} \in \mathbb{R}$ by $\langle u \rangle$.  
It is easy to see that the map 
\begin{equation}\label{identification0}
\begin{aligned}
Q &\to [0, \pi) \times \mathbb{R} \\
u &\mapsto ( \arg u, \langle u \rangle )
\end{aligned}
\end{equation}
is a bijection. From below we use this identification freely. 
For $a \in \mathcal{A}$, we define a \emph{renormalization operator} $T_{a} (\cdot): Q \to Q$ by 
\begin{equation}\label{renormalization}
T_{a} ( u ) = f^{-1}_a( u ). 
\end{equation}
Similarly, for $a_1, a_2 \in \mathcal{A}$ we define a map $T_{a_1 a_2} (\cdot): Q \to Q$ by 
\begin{equation*}
T_{a_1 a_2} ( u ) = f^{-1}_{a_2} \circ f^{-1}_{a_1}( u ), 
\end{equation*}
and call this also a renormalization operator. 
Note that we have $T_{a_1 a_2} = T_{a_2} \circ T_{a_1}$.

\begin{rem}
It is convenient to see $u \in Q$ as the ``relative position" between $I$ and $u$. 
For any $a \in \mathcal{A}$, $T_a(u)$ can be seen as the ``relative position" between 
the square $f_a(I)$ and $u$. 
\end{rem}

\subsection{Recurrent sets}

We say that $u \in Q$ is \emph{intersecting} if $K \cap u \neq \emptyset$.  
 
\begin{lem}\label{crucial}
Let $u \in Q$. Then $u$ is intersecting if and only if the following holds: 
there exists $M > 0$ and $a_i \in \mathcal{A} \ (i = 1, 2, \cdots)$ such that the sequence 
$\{ u_i \} \ (i = 0, 1, \cdots)$ defined by 
\begin{equation}\label{u}
u_0 = u, \ u_i = T_{a_i} u_{i-1} 
\end{equation} 
satisfies 
$| \langle u_i \rangle  | < M \ (i = 0, 1, 2, \cdots)$. 
\end{lem}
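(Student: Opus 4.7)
Starting from any $x_0 \in K \cap u$ (nonempty by assumption), the plan is to use $K = \bigcup_{a \in \mathcal{A}} f_a(K)$ to recursively choose $a_i \in \mathcal{A}$ with $x_{i-1} \in f_{a_i}(K)$, and set $x_i := f_{a_i}^{-1}(x_{i-1}) \in K$. A straightforward induction using \eqref{u} then shows $x_i \in u_i$: if $x_{i-1} \in u_{i-1}$, then $x_i = f_{a_i}^{-1}(x_{i-1}) \in f_{a_i}^{-1}(u_{i-1}) = u_i$. Since $x_i \in K \subset I$, the foot of the perpendicular from the origin to $u_i$ has length at most $|x_i| \le \sqrt{2}$, so $M = \sqrt{2}$ suffices uniformly in $u$.

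\textbf{Sufficiency.} For the converse, write $\omega_i = a_1 \cdots a_i$ and $f_{\omega_i} = f_{a_1} \circ \cdots \circ f_{a_i}$, so that $u_i = f_{\omega_i}^{-1}(u)$. Let $p_i \in u_i$ be the foot of the perpendicular from the origin, so $|p_i| = |\langle u_i \rangle| < M$ and $f_{\omega_i}(p_i) \in u$. The crux is to show that $f_{\omega_i}(y)$ converges to a common limit $x_\infty \in K$ as $i \to \infty$, independent of $y \in I$. Fixing $y_0 \in K$, both $y_0$ and $f_{a_{i+1}}(y_0)$ lie in $K \subset I$, and thus
\begin{equation*}
\bigl|f_{\omega_{i+1}}(y_0) - f_{\omega_i}(y_0)\bigr| \;=\; r_{\omega_i}\,\bigl|f_{a_{i+1}}(y_0) - y_0\bigr| \;\le\; r_{\omega_i}\sqrt{2},
\end{equation*}
which is summable. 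Hence $f_{\omega_i}(y_0)$ is Cauchy with limit $x_\infty \in K$, and for arbitrary $y \in I$ the bound $|f_{\omega_i}(y) - f_{\omega_i}(y_0)| \le r_{\omega_i}\sqrt{2} \to 0$ upgrades this to $f_{\omega_i}(y) \to x_\infty$. Applying this at $y = 0$ together with $|f_{\omega_i}(p_i) - f_{\omega_i}(0)| \le r_{\omega_i} M \to 0$ yields $f_{\omega_i}(p_i) \to x_\infty$. Since each $f_{\omega_i}(p_i) \in u$ and $u$ is closed, $x_\infty \in u \cap K$, proving that $u$ is intersecting.

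\textbf{Main obstacle.} The small subtlety is that the cylinder images $f_{\omega_i}(I)$ are not a priori nested (one only has $K \subset I$, not $f_a(I) \subset I$), so the naive nested-intersection argument on $f_{\omega_i}(I)$ does not apply. The Cauchy estimate circumvents this by restricting first to orbits that stay in $K$ and then extending to the full square $I$ via the contraction factor; once convergence of $f_{\omega_i}(y)$ is in hand, the uniform bound $|p_i| < M$ plugs directly into the contraction estimate to force $f_{\omega_i}(p_i) \to x_\infty$ even though $p_i$ varies with $i$.
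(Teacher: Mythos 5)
Your proof is correct. The necessity direction is essentially the paper's: both select $x\in K\cap u$, code it by a sequence $(a_i)$, and note that the pulled-back point lies on $u_i$ and stays in $I$, which bounds $|\langle u_i\rangle|$. For sufficiency you take a genuinely different route. The paper argues by contraposition: if $K\cap u=\emptyset$, then for any coding the point $x=\bigcap_i f_{a_1}\circ\cdots\circ f_{a_i}(I)$ misses $u$ by a definite distance, which the expanding inverse maps blow up, giving $|\langle u_i\rangle|\to\infty$. You instead prove the implication directly: a coding with bounded $\langle u_i\rangle$ forces $f_{\omega_i}(p_i)$ to converge to a point of $K$ lying on the closed set $u$, via a Cauchy/contraction estimate. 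Your version buys two things: it exhibits the intersection point explicitly, and, as you observe, it avoids relying on nestedness of the cylinders $f_{\omega_i}(I)$, which the paper's displayed intersection tacitly assumes (it is only literally an intersection of nested sets if one also normalizes $f_a(I)\subset I$; otherwise one must pass to a limit, exactly as you do). The paper's contrapositive is shorter and yields the slightly stronger conclusion that for a non-intersecting line every orbit diverges to infinity, not merely that no orbit stays bounded; both arguments are valid proofs of the lemma as stated.
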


\begin{proof}
Assume first that $u$ is intersecting. 
Let $x \in K$ be such that $x \in u$, and let 
$a_i \in \mathcal{A}  \ (i = 1, 2, \cdots)$ be a sequence such that 
\begin{equation*}
x = \bigcap_{i=1}^{\infty} f_{a_1} \circ \cdots \circ f_{a_i} (I). 
\end{equation*} 
Note that  
\begin{equation}\label{1}
f^{-1}_{a_i} \circ \cdots \circ f^{-1}_{a_1} (x) \in I 
\end{equation} 
for all $i \in \mathbb{N}$.
Define $\{ u_i \}$ by (\ref{u}). Then we have 
\begin{equation}\label{2}
u_i = f^{-1}_{a_i} \circ \cdots \circ f^{-1}_{a_1} (u).
\end{equation} 
Since $x \in u$, by (\ref{1}) and (\ref{2}) we conclude that $u_i \cap I \neq \emptyset$. This implies that 
$| \langle u_i \rangle  | \leq 1$. 

Assume next that $u$ is not intersecting. Let us take $a_i \in \mathcal{A} \ (i = 1, 2, \cdots)$, 
and let $\{ u_i \}$ be the sequence 
defined by (\ref{u}). 
Write 
\begin{equation*}
x = \bigcap_{i=1}^{\infty} f_{a_1} \circ \cdots \circ f_{a_i} (I). 
\end{equation*}
Since $x \notin u$, we have $f_{a_1} \circ \cdots \circ f_{a_i}(I) \cap u = \emptyset$ for sufficiently large $i$. 
Since the size of the square $f_{a_1} \circ \cdots \circ f_{a_i} (I)$ goes to $0$, 
we obtain $\lim_{i \to \infty} | \langle u_i \rangle  | = \infty$. 
\end{proof}

The above lemma leads to the following definition: 

\begin{dfn}
We call a nonempty bounded set $\mathcal{L} \subset Q$ a \emph{recurrent set} 
if for every $u \in \mathcal{L}$, 
there exists $a \in \mathcal{A}$ 
such that $T_{a} u \in \mathcal{L}$.  
\end{dfn}

Lemma \ref{crucial} implies the following: 

\begin{prop}
Let $\mathcal{L}$ be a recurrent set and $\theta \in [0, \pi)$. 
If the set $\{ t : (\theta, t) \in \mathcal{L}  \}$ 
contains an interval, then $\Pi_{ \theta } K$ contains an interval. 
\end{prop}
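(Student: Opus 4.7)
The plan is to show that every $t$ in the interval $[t_1, t_2] \subset \{t : (\theta, t) \in \mathcal{L}\}$ lies in $\Pi_\theta K$. Under the identification \eqref{identification0}, writing $u = (\theta, t) \in Q$ gives $\arg u = \theta$, so $u$ is perpendicular to $\ell_\theta$, and $\langle u \rangle = t$ equals $\Pi_\theta(x)$ for every point $x \in u$. Thus it suffices to prove that for each such $t$, the line $u = (\theta, t)$ is intersecting, i.e.\ $K \cap u \neq \emptyset$.

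Fixing such a $t$, I would set $u_0 = (\theta, t) \in \mathcal{L}$ and apply the recurrence property inductively: choose $a_i \in \mathcal{A}$ so that $u_i := T_{a_i} u_{i-1}$ satisfies $u_i \in \mathcal{L}$ for every $i \geq 1$. Because $\mathcal{L}$ is bounded as a subset of $Q \cong [0,\pi) \times \mathbb{R}$, the second coordinates are uniformly bounded: there exists $M > 0$ with $|\langle u_i \rangle| < M$ for all $i$. Lemma \ref{crucial} then guarantees that $u_0$ is intersecting, yielding $t = \langle u_0 \rangle \in \Pi_\theta K$.

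Running this argument over every $t \in [t_1, t_2]$ yields $[t_1, t_2] \subset \Pi_\theta K$, and the proposition follows. The whole proof is essentially a repackaging of Lemma \ref{crucial} together with the definition of a recurrent set; the only (minor) point to verify is that the ``bounded $|\langle u_i \rangle|$'' hypothesis of that lemma corresponds to boundedness of $\mathcal{L} \subset Q$, which is immediate since the $\arg$-coordinate already lives in the compact range $[0,\pi)$. Consequently I do not anticipate a real obstacle here; the content of the statement resides entirely in Lemma \ref{crucial}, which does the heavy lifting of turning a bounded forward orbit under the renormalization operators into a genuine intersection point of the line with $K$.
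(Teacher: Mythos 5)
Your proof is correct and is exactly the argument the paper intends: the paper gives no explicit proof, simply noting that the proposition follows from Lemma \ref{crucial}, and your filling-in (recurrence keeps the orbit in the bounded set $\mathcal{L}$, so the bounded-orbit direction of Lemma \ref{crucial} makes each line $u=(\theta,t)$ intersecting, and $\Pi_\theta x = t$ for $x \in K \cap u$ since $u \perp \ell_\theta$) is the standard way to do it.
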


\section{Outline of the proof of the main theorem}\label{outline}

\subsection{Perturbation}
In this section, we discuss the outline of the proof of Theorem \ref{thm1}. 
Let $\epsilon > 0$. 
Let $K \subset I$ be a self-similar set, and let 
$\mathcal{F} = \{ f_a \}_{a \in \mathcal{A}}$ be a set of 
contracting similarities that generates $K$. Denote the Hausdorff dimension of $K$ by $d$. 
Let $\mu$ be the associated uniform self-similar measure, i.e., 
$\mu$ is the unique Borel probability measure such that 
\begin{equation*}
\mu = \sum_{a \in \mathcal{A}} r_a^d f_a \mu, 
\end{equation*}
where $f_a \mu$ is the push-forward of $\mu$ under the map $f_a$. 
By retaking $\mathcal{F}$ if necessary, we can further assume that 
\begin{equation*}
c_0^{-1} \rho^{1/2} < r_a < c_0 \rho^{1/2}
\end{equation*}
for 
sufficiently small $\rho > 0$. 
Let $\mathcal{A}_{1}, \mathcal{A}_2$ be such that 
$\mathcal{A}_1 \sqcup \mathcal{A}_2 = \mathcal{A}$ and $|\mathcal{A}_1| = | \mathcal{A}_2 | = | \mathcal{A} | / 2$. 
Let $c_1 > 0$ be a sufficiently large constant, to be chosen later. 

\begin{rem}
In the proof we use constants $c_{k} \ (k = 0, 1, \cdots, 10)$. 
They may depend on each other but can be taken independently of $\rho > 0$. 
\end{rem}

Let $a \in \mathcal{A}_1$ and $\omega \in (-\epsilon, \epsilon) \times (-1, 1)^2$. Write 
$\omega = (\varphi, \gamma)$, where $\varphi \in (-\epsilon, \epsilon)$ and $\gamma \in (-1, 1)^{2}$. 
Let $f_a^{\omega}$ be the contracting similarity which satisfies the following: 
$f^{ \omega }_a(I)$ is the square that is obtained by rotating the square 
$f_a(I)$ by the angle $\varphi \in (-\epsilon, \epsilon)$ and shifted by 
$\gamma c_1 \rho \in 
(-c_1 \rho, c_1 \rho)^2$. 

Define 
\begin{equation*}
\Omega = \left(  (-\epsilon, \epsilon) \times (-1, 1)^2  \right)^{\mathcal{A}_1}. 
\end{equation*}
Let $\underline{\omega} = ( \omega_a )_{a \in \mathcal{A}_1} \in \Omega$, and 
denote $\omega_a = ( \varphi_a, \gamma_a )$, 
where $\varphi_a \in (-\epsilon, \epsilon)$ and $\gamma_a \in (-1, 1)^2$. 
We define 
\begin{equation*}
\mathcal{F}^{\underline{\omega}} = \{ f^{ \underline{\omega} }_a \}_{a \in \mathcal{A}}, 
\end{equation*}
a set of contracting similarities, in the following way: 
\begin{equation*}
f^{\underline{\omega}}_a = 
\begin{cases}
f^{\omega_a}_a & \text{ if \ } a \in \mathcal{A}_1 \\
f_a & \text{ if \ } a \in  \mathcal{A}_2. 
\end{cases}
\end{equation*}
Let $K^{\underline{\omega}}$ 
be the self-similar set generated by $\mathcal{F}^{\underline{\omega}}$. 
Note that if $\rho > 0$ is sufficiently small, then $K^{\underline{\omega}}$ is $\epsilon$-close to $K$. 

Recall that we defined the renormalization operator in (\ref{renormalization}). 
For $a \in \mathcal{A}_1$ and $\omega \in (-\epsilon, \epsilon) \times (-1, 1)^2$, we define the 
renormalization operator $T^{\omega}_a$ in analogous way. 
For $\underline{\omega} = ( \omega_a )_{a \in \mathcal{A}_1}\in \Omega$, 
we define 
\begin{equation*}
T^{\underline{\omega}}_a = 
\begin{cases}
T^{\omega_a}_a & \text{ if \ } a \in \mathcal{A}_1 \\
T_a & \text{ if \ } a \in \mathcal{A}_2. 
\end{cases}
\end{equation*}

\subsection{Outline of the proof}
In section \ref{construct}, we will construct the set 
$E \subset [0, \pi)$, and the set $L(\theta) \subset (-1, 1)$ for all $\theta \in E$. 
Define 
\begin{equation*}
\mathcal{L}^0 = \left\{ (\theta, t) : \theta \in E, \, t \in L( \theta ) \right\}. 
\end{equation*}  
Let 
\begin{equation*}
\mathcal{L}^{1} = 
\left\{  (\theta, t) : \exists (\theta_0, t_0) \in \mathcal{L}^0 \text{ with } | \theta - \theta_0 | < \rho, | t - t_0 | < \rho \right\}
\end{equation*}
and 
\begin{equation*}
\mathcal{L} = 
\left\{  (\theta, t) : \exists (\theta_0, t_0) \in \mathcal{L}^0 \text{ with } | \theta - \theta_0 | < \rho / 2, | t - t_0 | < \rho / 2 \right\}. 
\end{equation*}
We show that $\mathcal{L}$ is a recurrent set for some $\underline{\omega} \in \Omega$.  
For $u \in \mathcal{L}^1$, we define $\Omega^0(u) \subset \Omega$ to be the set of all 
$\underline{\omega} \in \Omega$ such that the following holds: 
there exists $\underline{b} \in \mathcal{A}^2$ and the image 
\begin{equation*}
T^{\underline{\omega}}_{ \underline{b} } (u) = \hat{u}
\end{equation*}
satisfies $\hat{u} \in \mathcal{L}^0$. 
The following crucial estimate will be proven in section \ref{key_prop}. 

\begin{prop}\label{key_lem}
There exists $c_2 > 0$ such that for any $u \in \mathcal{L}^1$, 
\begin{equation*}
\mathbb{P} \left( \Omega \setminus \Omega^0(u) \right) \leq 
\exp \left( -c_2 \rho^{ -\frac{1}{2} (d - 1) } \right). 
\end{equation*}
\end{prop}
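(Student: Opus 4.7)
The plan is a large-deviations bound that exploits the independence of the perturbation coordinates $\{\omega_a\}_{a \in \mathcal{A}_1}$ under $\mathbb{P}$. First, I restrict the existential in the definition of $\Omega^0(u)$ to words $\underline{b} = b_1 b_2$ with $b_1 \in \mathcal{A}_1$ and $b_2 \in \mathcal{A}_2$. For any such pair, $T^{\underline{\omega}}_{\underline{b}} = T_{b_2} \circ T^{\omega_{b_1}}_{b_1}$ depends on $\underline{\omega}$ only through the single coordinate $\omega_{b_1}$, because $f_{b_2}$ is unperturbed. Setting
$$A_{b_1} \;=\; \bigl\{\underline{\omega} \in \Omega : \exists\, b_2 \in \mathcal{A}_2 \text{ with } T^{\omega_{b_1}}_{b_1 b_2}(u) \in \mathcal{L}^0\bigr\},$$
the events $\{A_{b_1}\}_{b_1 \in \mathcal{A}_1}$ are mutually independent, so
$$\mathbb{P}\bigl(\Omega \setminus \Omega^0(u)\bigr) \;\le\; \mathbb{P}\Bigl(\bigcap_{b_1 \in \mathcal{A}_1} A_{b_1}^c\Bigr) \;=\; \prod_{b_1 \in \mathcal{A}_1}\bigl(1 - \mathbb{P}(A_{b_1})\bigr).$$

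Next I would establish the uniform single-trial lower bound $\mathbb{P}(A_{b_1}) \ge c_3\rho^{1/2}$, valid for every $b_1 \in \mathcal{A}_1$. For this, analyze the geometry of $\omega_{b_1} = (\varphi, \gamma) \mapsto T^{\omega_{b_1}}_{b_1 b_2}(u) \in Q$. The rotation by $\varphi$ shifts the angle coordinate of $T^{\omega_{b_1}}_{b_1}(u)$ by $-\varphi$, while the translation $\gamma c_1 \rho$ is amplified by $r_{b_1}^{-1} \asymp \rho^{-1/2}$ under the inverse contraction, giving a positional shift that is affine in $\gamma$ with Jacobian of order $c_1\rho^{1/2}$. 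Post-composing with $T_{b_2}$ multiplies the position by a further $r_{b_2}^{-1} \asymp \rho^{-1/2}$, so the image of the parameter domain is an $\epsilon \times c_1$ rectangle in $Q$ whose center $q(b_1, b_2)$ depends on $b_2$. As $b_2$ ranges over $\mathcal{A}_2$ (with $|\mathcal{A}_2| \asymp \rho^{-d/2}$, from $\sum_a r_a^d = 1$ and $r_a \asymp \rho^{1/2}$), the centers $q(b_1, b_2)$ populate a bounded subregion of $Q$ densely. One then estimates $\mathbb{P}(A_{b_1})$ from below by summing, over $b_2 \in \mathcal{A}_2$, the pushforward $\omega_{b_1}$-measures of the preimages of $\mathcal{L}^0$, using the density content of $\mathcal{L}^0$ constructed in Section~\ref{construct} and a bounded-overlap estimate to recover a total measure of order $\rho^{1/2}$.

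Combined with $|\mathcal{A}_1| = |\mathcal{A}|/2 \asymp \rho^{-d/2}$, this yields
$$\mathbb{P}\bigl(\Omega \setminus \Omega^0(u)\bigr) \;\le\; (1 - c_3\rho^{1/2})^{|\mathcal{A}_1|} \;\le\; \exp\bigl(-c_3 \rho^{1/2}\,|\mathcal{A}_1|\bigr) \;\le\; \exp\bigl(-c_2 \rho^{-(d-1)/2}\bigr),$$
as required. The hard part is the single-trial bound: the delicate point is controlling the multiplicity with which the preimages $\{T^{\omega_{b_1}}_{b_1 b_2}(u) \in \mathcal{L}^0\}$, indexed by $b_2 \in \mathcal{A}_2$, overlap in $\omega_{b_1}$-space, so that their union has measure at least $c_3\rho^{1/2}$. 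This requires coordinating the OSC (making the squares $f_{b_2}(I)$, and hence the centers $q(b_1, b_2)$, well-distributed), the dimension hypothesis $d > 1$ (so that $|\mathcal{A}_2|$ is large enough to cover the target region), the hypothesis $u \in \mathcal{L}^1$ (so that images fall in the region where $\mathcal{L}^0$ is defined), and the tailored construction of $\mathcal{L}^0$ in Section~\ref{construct}, whose density content at the relevant scale is matched to the combinatorics of the $b_2$-sweep. This is also where the extension beyond the homothety case of \cite{Takahashi} is most subtle, as the angular perturbation $\varphi$ must be coordinated with the deterministic rotations of the $T_{b_2}$.
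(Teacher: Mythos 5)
Your outer architecture --- restricting to two-letter words $b_1b_2$ with $b_1\in\mathcal{A}_1$, $b_2\in\mathcal{A}_2$, noting that the event $A_{b_1}$ depends only on the coordinate $\omega_{b_1}$, and bounding $\mathbb{P}(\Omega\setminus\Omega^0(u))$ by a product of single-trial failure probabilities via independence --- is exactly the paper's strategy (the paper phrases it as a Fubini argument over the coordinates indexed by a subset $\mathcal{A}_3\subset\mathcal{A}_1$). But the single-trial bound you propose is where the argument breaks. You claim $\mathbb{P}(A_{b_1})\ge c_3\rho^{1/2}$ \emph{uniformly over all} $b_1\in\mathcal{A}_1$, and compensate with $|\mathcal{A}_1|\asymp\rho^{-d/2}$. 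This is false: the admissible perturbations move the square $f_{b_1}(I)$ by at most $c_1\rho$ in translation and rotate it by at most $\epsilon$, so if the line $u$ stays at distance much larger than $\rho$ from the (perturbed) square $f^{\omega}_{b_1}(I)$, then $|\pos\, T^{\omega_{b_1}}_{b_1b_2}(u)|$ is forced to be $\gg 1$ for every $b_2$ and every admissible $\omega_{b_1}$, so $A_{b_1}=\emptyset$. Since a line meets only a small fraction of the $\asymp\rho^{-d/2}$ squares of side $\asymp\rho^{1/2}$, most $b_1$ contribute nothing, and your product $(1-c_3\rho^{1/2})^{|\mathcal{A}_1|}$ has no justification.

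What actually makes the count work in the paper is the hypothesis $u=(\theta,t)\in\mathcal{L}^1$ used quantitatively, not just qualitatively: there is $(\theta',t')\in\mathcal{L}^0$ within $\rho$, and the \emph{definition} of $t'\in L(\theta')$ hands you $N=c_6^2\rho^{-\frac12(d-1)}$ distinguished words $a_1^1,\dots,a_1^N\in\mathcal{A}_1$ together with angle-sets $\Phi_i$ of measure $>c_7$ such that for each $\varphi'\in\Phi_i$ some $a_2^i$ gives $\arg_{\theta'}I^{\varphi',0}(a_1^ia_2^i)\in E$ and $|\pos_{(\theta',t')}I^{\varphi',0}(a_1^ia_2^i)|\le 1$. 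For each such $a_1^i$ the success probability in $\omega_i$ is bounded below by a \emph{constant} $c_2'$ (shifting $\varphi=\varphi'+\theta-\theta'$ handles the angle, and the $\gamma$-translation, amplified by $r^{-1}\asymp\rho^{-1}$ against the shift $\gamma c_1\rho$, sweeps the position over an order-one window hitting $L(\hat\theta)$, which has measure $\ge c_8$). The product over these $N$ coordinates gives $\exp(-c_2\rho^{-\frac12(d-1)})$ directly. So the missing idea in your proposal is the selection mechanism: the exponent $\rho^{-\frac12(d-1)}$ must come from the \emph{number} $N$ of good first letters supplied by the construction of $L(\theta)$ (Proposition \ref{nice_estimate}), each with constant-order success probability, not from a uniform $\rho^{1/2}$ gain per letter over all of $\mathcal{A}_1$.
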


The sets $E$ and $L(\theta)$ are 
constructed in such a way that Proposition \ref{key_lem} holds. 
Below we prove Theorem \ref{thm1} assuming 
Proposition \ref{key_lem}. 
In section \ref{construct} we construct $E$ and $L(\theta)$, 
and show that the measure of the set $L(\theta)$ is bounded away from 
zero uniformly. 
Combining all these properties we prove Proposition \ref{key_lem} in section \ref{key_prop}. \vspace{2mm}

\begin{centering}
\begin{figure}[t]
\includegraphics[scale=0.88]{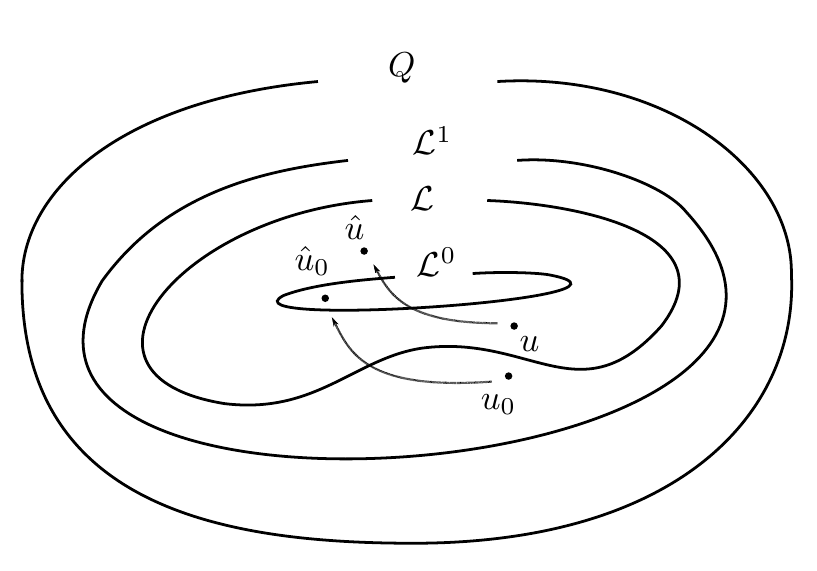}
\caption{Recurrent set}
\label{recurrence_figure}
\end{figure}
\end{centering}

We choose a finite $\rho^{ 5/2 }$-dense subset $\Delta$ of $\mathcal{L}^1$. 
Note that 
\begin{equation*}
| \Delta | \leq c_3 \rho^{ -5/2 } \cdot \rho^{ -5/2 } = c_3 \rho^{-5}. 
\end{equation*}
Now, if $\rho > 0$ is small enough, 
\begin{equation*}
c_{3} \, \rho^{-5} \exp \left( -c_2 \rho^{ -\frac{1}{2} ( d - 1 ) } \right) < 1, 
\end{equation*}
and therefore we can find $\underline{\omega}_0 \in \Omega$ such that 
$\underline{\omega}_0 \in \Omega^0(u)$ for all $u \in \Delta$.

\begin{rem}
The above is saying that any $u \in \Delta$ can return to $\mathcal{L}^0$ by an 
action of the renormalization operator of the form 
$T^{\underline{\omega}_0}_{ \underline{b} }$. 
\end{rem}

Theorem \ref{thm1} follows from the following claim: 

\begin{claim}
For $\underline{\omega}_0 \in \Omega$, the set $\mathcal{L}$ is a recurrent set.  
\end{claim}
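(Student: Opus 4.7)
The strategy is to show that every $u \in \mathcal{L}$ is mapped back into $\mathcal{L}$ by some two-step renormalization $T^{\underline{\omega}_0}_{\underline{b}}$ with $\underline{b} = b_1 b_2 \in \mathcal{A}^2$. (Recurrence by two-letter words amounts to recurrence under the second-iterate IFS $\{f^{\underline{\omega}_0}_{b_1} \circ f^{\underline{\omega}_0}_{b_2}\}_{(b_1, b_2) \in \mathcal{A}^2}$, which generates the same attractor, so this is consistent with the notion of a recurrent set.) Fix $u \in \mathcal{L}$. Since $\mathcal{L} \subset \mathcal{L}^1$ and $\Delta$ is $\rho^{5/2}$-dense in $\mathcal{L}^1$, I would pick $u' \in \Delta$ within $\rho^{5/2}$ of $u$ in each coordinate of the identification $Q \simeq [0, \pi) \times \mathbb{R}$. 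By the construction of $\underline{\omega}_0$, there is some $\underline{b} \in \mathcal{A}^2$ with $T^{\underline{\omega}_0}_{\underline{b}} u' \in \mathcal{L}^0$. It then suffices to show that the two coordinates of $T^{\underline{\omega}_0}_{\underline{b}} u$ differ by at most $\rho/2$ from those of $T^{\underline{\omega}_0}_{\underline{b}} u'$, since this places $T^{\underline{\omega}_0}_{\underline{b}} u$ in the $\rho/2$-neighborhood of $\mathcal{L}^0$, i.e., in $\mathcal{L}$.

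The key computation is to identify $T^{\underline{\omega}_0}_{\underline{b}}$ explicitly in the coordinates $(\theta, t)$. As the inverse of the composite similarity $f^{\underline{\omega}_0}_{b_1} \circ f^{\underline{\omega}_0}_{b_2}$ of ratio $\lambda := r_{b_1} r_{b_2} \sim \rho$ and rotation angle $\beta$, a direct calculation gives $T^{\underline{\omega}_0}_{\underline{b}} : (\theta, t) \mapsto (\theta - \beta,\ t/\lambda + h(\theta))$, where $h$ is a sinusoid of amplitude $O(\rho^{-1})$: the translation vectors of the individual $f^{\underline{\omega}_0}_a$ (including the perturbations $\gamma_a c_1 \rho$) are uniformly bounded, but the inverse of the composite scales them by $1/\lambda \sim \rho^{-1}$. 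In particular $\|h'\|_\infty = O(\rho^{-1})$, while the $\theta$-component is a rigid shift independent of $t$.

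Combining these estimates with $|\theta(u) - \theta(u')|, |t(u) - t(u')| \leq \rho^{5/2}$, one obtains
\begin{equation*}
|\theta(T^{\underline{\omega}_0}_{\underline{b}} u) - \theta(T^{\underline{\omega}_0}_{\underline{b}} u')| \leq \rho^{5/2}, \qquad
|t(T^{\underline{\omega}_0}_{\underline{b}} u) - t(T^{\underline{\omega}_0}_{\underline{b}} u')| \leq \rho^{5/2}/\lambda + \|h'\|_\infty \cdot \rho^{5/2} \leq C \rho^{3/2}.
\end{equation*}
Both quantities are strictly less than $\rho/2$ once $\rho$ is small enough, which proves the claim. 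The one subtle point is the cross-term coming from the $\theta$-dependence of the shift $h(\theta)$ in the $t$-coordinate; this is precisely why the density of $\Delta$ was chosen at scale $\rho^{5/2}$ rather than $\rho^{3/2}$, so that the $\rho^{-1}$ expansion incurred by a two-step renormalization still fits comfortably inside the $\rho/2$ tolerance defining $\mathcal{L}$.
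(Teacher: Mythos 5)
Your proof is correct and follows essentially the same route as the paper: approximate $u$ by a point $u_0 \in \Delta$ at scale $\rho^{5/2}$, apply the two-step renormalization $T^{\underline{\omega}_0}_{\underline{b}}$ guaranteed by the choice of $\underline{\omega}_0$, and observe that the angle coordinate is merely shifted while the position coordinate expands by $O(\rho^{-1})$, yielding errors of $\rho^{5/2}$ and $O(\rho^{3/2})$ respectively, both below the $\rho/2$ tolerance defining $\mathcal{L}$. Your explicit form $(\theta,t)\mapsto(\theta-\beta,\ t/\lambda+h(\theta))$ just spells out the step the paper labels ``it is easy to see.''
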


\begin{proof}[proof of the claim]
Let $u \in \mathcal{L}$.  
Let $u_0 \in \Delta$ be such that 
$| \arg u - \arg u_0 | < \rho^{5/2}$ and $| \langle u \rangle - \langle u_0 \rangle | < \rho^{5/2}$. 
By the choice of $\underline{\omega}_0$, we have $\underline{\omega}_0 \in \Omega^{0}( u_0 )$.  
Therefore, there exists $\underline{b} \in \mathcal{A}^2$ such that, writing 
\begin{equation*}
T_{\underline{b}}^{\underline{\omega}_0} (u_0) = \hat{u}_0, 
\end{equation*}
we have $\hat{u}_0  \in \mathcal{L}^0$. Let 
\begin{equation*}
T_{\underline{b}}^{\underline{\omega}_0} (u) = \hat{u}.
\end{equation*}
It is easy to see that $| \arg \hat{u} - \arg \hat{u}_0 | < \rho^{5/2}$ and 
$| \langle \hat{u} \rangle - \langle \hat{u}_0 \rangle |$ 
is of order $\rho^{3/2}$. Therefore, we obtain $\hat{u} \in \mathcal{L}$. 
\end{proof}

\section{Construction of the set $E$ and $L(\theta)$}\label{construct}

\subsection{Construction of $E$}\label{Marstrand}
Kaufman's proof of Marstrand's theorem tell us that 
the measure $\Pi_{\theta} \mu$ is 
absolutely continuous with respect to Lebesgue measure 
for a.e. $\theta \in [0, \pi)$, with $L^2$-density $\chi_{\theta}$ 
satisfying 
\begin{equation*}
\int_{[0, \pi)} \| \chi_{\theta} \|_{L^2}^2 \, d \theta < c_{4}. 
\end{equation*}
See, for example, \cite{PalisTakens}. 
We define 
\begin{equation*}
E = \left\{ \theta \in [0, \pi) : \| \chi_{\theta} \|_{L^2}^2 < c_5 \right\}, 
\end{equation*}
where $c_5 > 0$ is a sufficiently large constant so that 
$| [0, \pi) \setminus E | < \epsilon / 2$.

\subsection{Construction of $L(\theta)$}
Let $a_1 \in \mathcal{A}_1, a_2 \in \mathcal{A}_2$ 
and $\omega = (\varphi, \gamma) \in (-\epsilon, \epsilon) \times (-1, 1)^2$.  
We denote the square $f^{\omega }_{a_1}(I)$ by $I^{\varphi, \gamma}(a_1)$, and the square 
$f^{\omega }_{a_1} \circ f_{a_2} (I)$ by $I^{ \varphi, \gamma }( a_1 a_2 )$. 
Let $u = ( \theta, t ) \in Q$. 
For 
\begin{equation*}
( \hat{\theta}, \hat{t} ) = T_{a_1}^{\omega}(u), 
\end{equation*}
we denote  
$\hat{\theta}$ by $\arg_{\theta} I^{ \varphi, \gamma }( a_1 )$ and 
$\hat{t}$ by $\pos_{(\theta, t)} I^{ \varphi, \gamma } (a_1 )$. 
Define $\arg_{\theta} I^{ \varphi, \gamma }( a_1 a_2 )$ and 
$\pos_{(\theta, t)} I^{ \varphi, \gamma } (a_1 a_2)$ analogously.

With $c_{6} > 0$ conveniently small, to be chosen later, let 
\begin{equation*}
N = c^{2}_{6} \rho^{-\frac{1}{2} ( d - 1 ) }. 
\end{equation*}
Let $\theta \in E$. 
We define $L(\theta)$ to be the set of points 
$t \in (-1, 1)$ such that the following holds 
($c_7 > 0$ is a sufficiently small constant to be chosen later) :  
there exist mutually distinct words 
$\{ a_1^1, a_1^2, \cdots, a_1^N \} \subset \mathcal{A}_1$ and the sets 
$\Phi_{i} \subset (-\epsilon, \epsilon)$ with $| \Phi_{i} | > c_{7}$ ($i = 1, 2, \cdots, N$) 
such that for all $a^i_1$ and $\varphi \in \Phi_{i}$, there exists $a^{i}_2 \in \mathcal{A}_2$ such that 
\begin{equation*}
\arg_{\theta} I^{ \varphi, 0 }(a_1^i a_2^{i} ) \in E \text{ \ and \ } 
| \pos_{(\theta, t)} I^{ \varphi, 0 }(a_1^i a_2^{i} ) | \leq 1. 
\end{equation*}
In the next section, we will prove the following estimate:
\begin{prop}\label{nice_estimate}
If $c_{6} > 0$ is sufficiently small, there exists $c_{8} > 0$ such that $| L(\theta) | > c_{8}$ for all $\theta \in E$.  
\end{prop}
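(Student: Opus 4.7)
The plan is to produce a set $T \subset (-1,1)$ with $|T| \geq c_8$ such that $T \subset L(\theta)$; then $|L(\theta)| \geq |T| \geq c_8$ follows.

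\textbf{Step 1 (good-density set).} Since $\theta \in E$, $\chi_\theta$ is a probability density supported in an interval of length $O(1)$ with $\|\chi_\theta\|_{L^2}^2 < c_5$. A standard Markov / Cauchy--Schwarz argument yields constants $0 < \alpha < \beta$ and $c_8 > 0$, depending only on $c_5$, such that
\[
T := \{ t \in (-1, 1) : \alpha \leq \chi_\theta(t) \leq \beta \}, \qquad |T| \geq c_8.
\]

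\textbf{Step 2 (counting distinct first letters).} Fix $t \in T$ and first take $\varphi = 0$. Each pair $(a_1, a_2) \in \mathcal{A}_1 \times \mathcal{A}_2$ gives a level-$2$ cylinder $f_{a_1} f_{a_2}(I)$ of diameter $\asymp \rho$ carrying $\mu$-mass $r_{a_1}^d r_{a_2}^d \asymp \rho^d$. Because $\chi_\theta(t) \asymp 1$, the $\mu$-mass in a $\rho$-tube about $\ell_{\theta, t}$ is $\asymp \rho$, so the number of such pairs with projected cylinder within $\rho$ of $t$ equals $\asymp \rho^{1-d}$. To upgrade this to a count of \emph{distinct} $a_1$'s, I apply the same estimate one level down: the push-forward $f_{a_1}^* \mu$ projects onto $\ell_{\theta''_{a_1}}$, with $\theta''_{a_1} = \theta - \mathrm{rot}(f_{a_1})$, with density $\chi_{\theta''_{a_1}}$, and averaging over the continuous $\varphi$-parameter (which shifts the angle) lets me invoke the Kaufman bound $\int \|\chi_\vartheta\|_{L^2}^2 \, d\vartheta < c_4$. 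This caps the number of contributing $a_2$'s per $a_1$ at $\asymp \rho^{(1-d)/2}$ on average, and dividing yields $\gtrsim \rho^{(1-d)/2}$ distinct contributing $a_1$'s; choosing $c_6$ small enough, this is $\geq N = c_6^2 \rho^{-(d-1)/2}$.

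\textbf{Step 3 ($\varphi$-stability).} For each of these $a_1$'s I must show $|\Phi_{a_1}(\theta, t)| > c_7$. Since $\arg_\theta I^{\varphi, 0}(a_1 a_2)$ is affine in $\varphi$ and $|[0,\pi) \setminus E| < \epsilon / 2$, the arg condition alone holds on a $\varphi$-set of measure $\geq 3\epsilon/2$. As $\varphi$ varies the pulled-back position $t''_{a_1, \varphi}$ moves at rate $O(1)$, so a second Fubini estimate --- integrating the Step~2 count against $\varphi$ --- shows the pos condition persists on a $\varphi$-subinterval of definite measure for a positive fraction of the $a_1$'s. After pigeonholing, at least $N$ indices survive with $|\Phi_i| > c_7$, placing $t$ in $L(\theta)$.

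\textbf{Main obstacle.} The delicate step is Step 2: converting a count of contributing pairs into a count of distinct first letters. A naive pair-level bound allows all $\rho^{1-d}$ pairs to share the same $a_1$, destroying the required lower bound $N$. The remedy is to use the $L^2$ bound at the pulled-back angles $\theta''_{a_1}$ (or, via averaging over $\varphi$, at nearby angles), which controls how many $a_2$'s a single $a_1$ can contribute. Kaufman's integrated bound $\int \|\chi_\vartheta\|_{L^2}^2 \, d\vartheta < c_4$ is precisely the tool that makes this quantitative; tuning $c_6, c_7, c_8$ afterwards is routine.
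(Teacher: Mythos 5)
Your overall strategy (double counting plus the $L^2$ bound on $\chi_\theta$ to control multiplicity) is in the right spirit, and you correctly isolate the main difficulty --- counting \emph{distinct} first letters $a_1$ --- but both of the steps where you claim to resolve it have genuine gaps. First, Step~1 does not deliver what Step~2 needs: $\chi_\theta$ is only an $L^2$ function, so the pointwise bounds $\alpha\le\chi_\theta(t)\le\beta$ on your set $T$ give no control on the $\mu$-mass of a $\rho$-tube (or $\rho^{1/2}$-tube) around the line through $t$ at the \emph{fixed} small scale $\rho$; Lebesgue differentiation only helps in the limit $\rho\to 0$, not uniformly. The paper never selects $t$ by pointwise density values: it builds the counting function $\psi=\sum_{a_1\ \mathrm{good}}\mathbf{1}_{J_{a_1}}$ at scale $\rho^{1/2}$ and takes $L(\theta)\supset\mathcal{D}=\{\psi\ge c_6^2\rho^{-(d-1)/2}\}$, getting $|\mathcal{D}|\ge c_8$ by a Chebyshev argument from a lower bound on $\int\psi$ and an upper bound on $\|\psi\|_\infty$, so the measure of the target set and the count $N$ of distinct $a_1$'s come out of the same inequality.

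Second, your distinctness mechanism does not close. An \emph{average} cap of $\rho^{(1-d)/2}$ on the number of contributing $a_2$'s per $a_1$ does not let you divide the total pair count by it to lower-bound the number of distinct $a_1$'s (the mass could concentrate on a few $a_1$'s of huge multiplicity); you need a \emph{uniform} multiplicity bound. Moreover, invoking Kaufman's integrated bound $\int\|\chi_\vartheta\|_{L^2}^2\,d\vartheta<c_4$ at the pulled-back angles $\theta''_{a_1}$ requires knowing that those specific angles land where the $L^2$ norm is controlled, which is circular at this stage: arranging $\arg_\theta I^{\varphi,0}(a_1a_2)\in E$ is part of the definition of $L(\theta)$ and serves the later recurrence step, not the counting here. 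The paper's actual device is a good/bad dichotomy at the single fixed angle $\theta\in E$ and at level one: $a_1$ is bad if more than $c_6^{-1}\rho^{-(d-1)/2}$ of the intervals $\mathcal{J}(\tilde a)$ cluster near $\mathcal{J}(a_1)$, and Cauchy--Schwarz against $\|\chi_\theta\|_{L^2}^2<c_5$ shows at most $6c_5c_6c_9^3\rho^{-d/2}$ words are bad; restricting $\psi$ to good words yields the uniform bound $\|\psi\|_\infty\le c_6^{-1}\rho^{-(d-1)/2}$ that your argument is missing. Your Step~3 ($\varphi$-stability via Fubini) is essentially the paper's treatment of $\Phi_{a_1,t}$ and $J_{a_1}$, but it must be run \emph{before} the distinctness count, since $|J_{a_1}|\gtrsim c_6c_{10}\rho^{1/2}$ is precisely the input to the lower bound on $\int\psi$.
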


\subsection{Projections of the squares $I(a)$}
Let $\theta \in E$, and let $\widetilde{ \mathcal{A} } \subset \mathcal{A}$ be such that 
$| \widetilde{ \mathcal{A} } | > | \mathcal{A} | / 8$. 
For $a \in \widetilde{ \mathcal{A} }$, we have  
\begin{equation}\label{measure}
c_{9}^{-1} \rho^{\frac{1}{2}d} < \mu ( I(a) ) < 
c_{9} \rho^{\frac{1}{2} d}. 
\end{equation}
Write $\mathcal{J}( a ) := \Pi_{\theta} ( I( a ) )$.
Then 
\begin{equation*}
c_{9}^{-1} \rho^{1/2} < | \mathcal{J}(a) | < c_{9} \rho^{1/2}. 
\end{equation*}
We call $a \in \widetilde{ \mathcal{A} }$ \emph{$(\widetilde{\mathcal{A}}, \theta)$-good} if there are no more than 
$c_{6}^{-1} \rho^{-\frac{1}{2}( d - 1 )}$ intervals $\mathcal{J}( \tilde{a} )$ $(\tilde{a} \in \widetilde{A})$ 
whose centers are distant from the center of $\mathcal{J}(a)$ by less than $c_9^{-1} \rho^{ 1/2 }$. 
Call $a \in \widetilde{ \mathcal{A} }$ \emph{$( \widetilde{\mathcal{A}}, \theta )$-bad} if it is not 
$(\widetilde{\mathcal{A}}, \theta)$-good. 
Recall that, since $\theta \in E$, the measure $\Pi_{\theta} \mu$ has $L^2$-density $\chi_{\theta}$ which satisfies  
$\left\| \chi_{\theta} \right\|^2_{L^2} < c_5$.

\begin{lem}\label{bad_good}
The number of $(\widetilde{ \mathcal{A} }, \theta)$-bad words is less than 
\begin{equation*}
6 c_5 c_{6}  c^3_{9} \rho^{-\frac{1}{2}d}. 
\end{equation*}
In particular, if $c_6 > 0$ is sufficiently small, the number of $(\widetilde{\mathcal{A}}, \theta)$-good words is at least 
$| \widetilde{\mathcal{A}} | / 2$. 
\end{lem}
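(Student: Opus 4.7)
The strategy is a two-step reduction of the count $|B|$ of bad words to a Lebesgue-measure estimate for a ``heavy set'' on $\ell_\theta$, exploiting $\|\chi_\theta\|_{L^2}^2 < c_5$ on $E$. First, I translate each bad $a$ into a concentration statement for $\Pi_\theta\mu$: the $\geq c_6^{-1}\rho^{-(d-1)/2}$ witnesses $\tilde{a}$ have their $\mathcal{J}(\tilde{a})$ (each of length $<c_9\rho^{1/2}$) all contained in the interval $[c_a - L/2, c_a+L/2]$ with $L := (c_9 + 2c_9^{-1})\rho^{1/2}$, where $c_a$ denotes the center of $\mathcal{J}(a)$. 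By the OSC the corresponding squares $I(\tilde{a})$ are $\mu$-disjoint, with $\mu(I(\tilde{a})) \geq c_9^{-1}\rho^{d/2}$ by (\ref{measure}), so
\begin{equation*}
\Pi_\theta\mu\bigl([c_a - L/2,\, c_a + L/2]\bigr) \;\geq\; c_6^{-1}\rho^{-(d-1)/2} \cdot c_9^{-1}\rho^{d/2} \;=\; c_6^{-1}c_9^{-1}\rho^{1/2}.
\end{equation*}
Equivalently, $c_a$ lies in the heavy set $B^{\ast} := \{t \in \mathbb{R} : (\chi_\theta \ast \mathbf{1}_{[-L/2,\,L/2]})(t) \geq c_6^{-1}c_9^{-1}\rho^{1/2}\}$.

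Second, I would bound $|B^{\ast}|$ by an $L^2$ method. Young's inequality gives $\|\chi_\theta \ast \mathbf{1}_{[-L/2,L/2]}\|_{L^2} \leq \|\chi_\theta\|_{L^2}\,L \leq L\sqrt{c_5}$, and Chebyshev then yields
\begin{equation*}
|B^{\ast}| \;\leq\; \frac{c_5\,L^2}{(c_6^{-1}c_9^{-1}\rho^{1/2})^{2}} \;=\; c_5\,c_6^2\,c_9^2\,L^2\,\rho^{-1} \;=\; O\bigl(c_5\,c_6^2\,c_9^4\bigr).
\end{equation*}

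Finally, I would convert this measure bound into a cardinality bound on $B$. Since $c_a \in B^{\ast}$ and $|\mathcal{J}(a)| \leq c_9\rho^{1/2}$ for every bad $a$, every $I(a)$ projects into $S := B^{\ast} + [-c_9\rho^{1/2}/2,\, c_9\rho^{1/2}/2]$. Using OSC-disjointness of the $I(a)$ for the lower bound and Cauchy--Schwarz for the upper bound,
\begin{equation*}
|B|\cdot c_9^{-1}\rho^{d/2} \;\leq\; \mu\bigl(\Pi_\theta^{-1}(S)\bigr) \;=\; \Pi_\theta\mu(S) \;\leq\; \sqrt{|S|}\,\sqrt{c_5}.
\end{equation*}
Since $|S| \leq |B^{\ast}| + c_9\rho^{1/2} = O(c_5 c_6^2 c_9^4)$ for small $\rho$, this gives $|B| = O(c_5 c_6 c_9^3 \rho^{-d/2})$, and a careful accounting of constants yields the stated bound $6 c_5 c_6 c_9^3 \rho^{-d/2}$. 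The ``in particular'' assertion then follows from $|\widetilde{\mathcal{A}}| \gtrsim \rho^{-d/2}$ (because $\sum_{a}r_a^d = 1$ with $r_a \asymp \rho^{1/2}$, so $|\mathcal{A}| \asymp \rho^{-d/2}$), so taking $c_6$ small enough makes the bound on $|B|$ less than $|\widetilde{\mathcal{A}}|/2$.

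The main obstacle is the careful tracking of the several interval-length and threshold constants (the $c_9^{-1}\rho^{1/2}$ proximity window, the $c_9\rho^{1/2}$ projection lengths, the fattening in $S$, and the threshold defining $B^{\ast}$) to recover the explicit factor $6$; the conceptual ingredients --- OSC-disjointness for the lower mass bounds, Young + Chebyshev for $|B^{\ast}|$, and Cauchy--Schwarz for the final reduction --- slot together routinely once the two-step picture is fixed.
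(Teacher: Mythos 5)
Your argument reaches the stated bound but by a genuinely different route from the paper's. The paper also begins from the observation that a bad $a$ forces $\int_{3\mathcal{J}(a)}\chi_\theta \geq c_6^{-1}c_9^{-1}\rho^{1/2}$, but it then applies Cauchy--Schwarz \emph{locally} on each window to get $\int_{3\mathcal{J}(a)}\chi_\theta^2 \geq \tfrac{1}{3}c_6^{-1}c_9^{-2}\int_{3\mathcal{J}(a)}\chi_\theta$, extracts from the family $\{3\mathcal{J}(a)\}$ (over bad $a$) a subfamily with the same union $\mathcal{J}^*$ and covering multiplicity at most $2$, sums to obtain $\int_{\mathcal{J}^*}\chi_\theta \leq 6c_5c_6c_9^2$, and finally counts bad words using $\mu(I(a)) > c_9^{-1}\rho^{\frac{1}{2}d}$ and OSC-disjointness. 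You replace the local Cauchy--Schwarz plus bounded-multiplicity covering step by a global Young--Chebyshev bound on the superlevel set of $\chi_\theta * \mathbf{1}_{[-L/2,L/2]}$, followed by one global Cauchy--Schwarz to convert the measure bound back into a count. Both are $L^2$ non-concentration arguments of comparable length; the paper's avoids convolutions, yours avoids the covering lemma.

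One step as written is false, though easily repaired: the inequality $|S| \leq |B^{*}| + c_9\rho^{1/2}$ for $S = B^{*} + [-c_9\rho^{1/2}/2,\,c_9\rho^{1/2}/2]$ does not hold for a general closed set $B^{*}$, since the $r$-neighborhood of a set with many connected components can be far larger than the set itself (each component can contribute an extra $2r$). The fix is to bound $|S|$ directly rather than via $|B^{*}|$: if $t \in S$, pick $s \in B^{*}$ with $|t-s| \leq c_9\rho^{1/2}/2$; then $[s - L/2, s + L/2] \subseteq [t - L'/2, t + L'/2]$ with $L' = L + c_9\rho^{1/2} = (2c_9 + 2c_9^{-1})\rho^{1/2}$, so $S$ is contained in the superlevel set $\{\chi_\theta * \mathbf{1}_{[-L'/2,L'/2]} \geq c_6^{-1}c_9^{-1}\rho^{1/2}\}$, and the same Young--Chebyshev estimate applied to the wider window gives $|S| \leq c_5 c_6^2 c_9^2 (2c_9+2c_9^{-1})^2$ directly. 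Feeding this into your final display yields $|B| \leq 2c_5c_6(c_9^3 + c_9)\rho^{-\frac{1}{2}d} \leq 4c_5c_6c_9^3\rho^{-\frac{1}{2}d}$ for $c_9 \geq 1$, which is within the stated bound, and the ``in particular'' clause follows as you say from $|\widetilde{\mathcal{A}}| > |\mathcal{A}|/8$ and $|\mathcal{A}| \asymp \rho^{-\frac{1}{2}d}$.
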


\begin{proof} 
Let $a \in \widetilde{ \mathcal{A} }$ be $(\widetilde{ \mathcal{A} }, \theta)$-bad. 
Then we have 
\begin{equation*}
\begin{aligned}
\int_{3 \mathcal{J}( a )} \chi_{\theta} 
&\geq c^{-1}_{9} \rho^{ \frac{1}{2}d} \cdot c_{6}^{-1} \rho^{ -\frac{1}{2} (d -1) } \\
&= c^{-1}_{6} c^{-1}_{9} \rho^{1/2} > \frac{1}{3} c_{6}^{-1} c_{9}^{-2} | 3 \mathcal{J}( a ) |, 
\end{aligned}
\end{equation*}
where $3 \mathcal{J}(a)$ is the interval of the same center as $\mathcal{J}(a)$ and length $3| \mathcal{J}(a) |$. 
By the Cauchy-Schwarz inequality, 
\begin{equation*}
\begin{aligned}
\frac{1}{3} c_{6}^{-1} c_{9}^{-2} | 3 \mathcal{J}( a ) | \int_{3 \mathcal{J}( a )} \chi_{\theta}  &\leq 
\left( \int_{3 \mathcal{J}( a )} \chi_{\theta}  \right)^{2} \\
&\leq | 3 \mathcal{J} ( a ) | \int_{3 \mathcal{J}( a )} \chi_{\theta}^{2}, 
\end{aligned}
\end{equation*}
and thus 
\begin{equation*}
\int_{3 \mathcal{J}( a )} \chi_{\theta}^{2} \geq 
\frac{1}{3} c_{6}^{-1} c_{9}^{-2}  \int_{3 \mathcal{J}( a )} \chi_{\theta}. 
\end{equation*}
Let $\mathcal{J}^{*}$ be the union over all bad words 
$a \in \widetilde{ \mathcal{A} }$ of the intervals $3\mathcal{J}( a )$. One can 
extract a subfamily of intervals whose union is $\mathcal{J}^{*}$ and does not cover any point more than twice. 
Then we obtain 
\begin{equation*}
\int_{\mathcal{J}^*} \chi_{\theta}^{2}  \geq 
\frac{1}{6} c_{6}^{-1} c_{9}^{-2}  \int_{ \mathcal{J}^{*} } \chi_{\theta}.
\end{equation*}
Therefore,   
\begin{equation*}
 \int_{ \mathcal{J}^{*}} \chi_{\theta} \leq 6 c_5 c_{6} c_{9}^2. 
\end{equation*}
As $\mathcal{J}^{*}$ contains $\mathcal{J}( a )$ 
for all bad $a \in \widetilde{ \mathcal{A} }$, together with (\ref{measure}) 
the estimate of the lemma follows. 
\end{proof}

Lemma \ref{bad_good} implies the following: 
 
\begin{lem}\label{proj}
\begin{equation*}
\Big| \bigcup_{ a \in \widetilde{\mathcal{A}} } \mathcal{J}(a) \Big| > c_6 c_{10}. 
\end{equation*}
\end{lem}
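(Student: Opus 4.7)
The plan is to combine Lemma \ref{bad_good} with a greedy selection argument to extract a subfamily of $(\widetilde{\mathcal{A}},\theta)$-good words whose projected intervals $\mathcal{J}(a)$ are essentially disjoint, and then to total up their lengths.

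First I would pin down the sizes. From $\sum_{a} r_a^d = 1$ together with $c_0^{-1}\rho^{1/2} < r_a < c_0 \rho^{1/2}$, the alphabet satisfies $|\mathcal{A}| \asymp \rho^{-d/2}$, so $|\widetilde{\mathcal{A}}| \gtrsim \rho^{-d/2}$. By Lemma \ref{bad_good}, choosing $c_6$ small, the set $G \subset \widetilde{\mathcal{A}}$ of $(\widetilde{\mathcal{A}},\theta)$-good words has cardinality $|G| \geq |\widetilde{\mathcal{A}}|/2 \gtrsim \rho^{-d/2}$.

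Next I would run a greedy selection: pick $a_1 \in G$; remove from $G$ all words $\tilde a \in \widetilde{\mathcal{A}}$ whose $\mathcal{J}(\tilde a)$ has center within $c_9^{-1}\rho^{1/2}$ of the center of $\mathcal{J}(a_1)$; pick $a_2$ from what remains of $G$, and iterate. By the goodness of $a_i$, each removal step discards at most $c_6^{-1}\rho^{-(d-1)/2}$ words (plus $a_i$ itself). The process can therefore continue until we have selected
\begin{equation*}
M \; \gtrsim \; \frac{|G|}{c_6^{-1}\rho^{-(d-1)/2}} \; \gtrsim \; c_6 \, \rho^{-1/2}
\end{equation*}
good words $a_1,\ldots,a_M$ whose $\mathcal{J}(a_i)$-centers $x_1,\ldots,x_M$ are pairwise at distance at least $c_9^{-1}\rho^{1/2}$.

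Finally I would turn disjointness of centers into disjointness of measures. Each $\mathcal{J}(a_i)$ is an interval of length between $c_9^{-1}\rho^{1/2}$ and $c_9\rho^{1/2}$ centered at $x_i$, so it contains the subinterval
\begin{equation*}
J_i \; = \; \bigl[x_i - \tfrac14 c_9^{-1}\rho^{1/2},\; x_i + \tfrac14 c_9^{-1}\rho^{1/2}\bigr],
\end{equation*}
and the $J_i$ are pairwise disjoint because $|x_i - x_j| \geq c_9^{-1}\rho^{1/2}$. Hence
\begin{equation*}
\Bigl|\bigcup_{a\in\widetilde{\mathcal{A}}} \mathcal{J}(a)\Bigr| \;\geq\; \sum_{i=1}^{M} |J_i| \;\gtrsim\; M \cdot \rho^{1/2} \;\gtrsim\; c_6,
\end{equation*}
which gives the claimed bound $c_6 c_{10}$ for a suitable absolute constant $c_{10}>0$. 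The main thing to be careful about is bookkeeping the dependencies on $c_0, c_9$ so that the final constant $c_{10}$ is truly independent of $\rho$; everything else is a routine packing estimate once Lemma \ref{bad_good} is in hand.
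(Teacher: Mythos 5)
Your argument is correct and is essentially the paper's proof: the paper's one-line computation $\frac{1}{2}|\widetilde{\mathcal{A}}|\cdot\bigl(c_6^{-1}\rho^{-\frac{1}{2}(d-1)}\bigr)^{-1}\cdot c_9^{-1}\rho^{1/2}$ is exactly the packing bound (number of good words, divided by the multiplicity allowed by goodness, times the minimal interval length) that you establish explicitly via the greedy selection of words with $c_9^{-1}\rho^{1/2}$-separated centers. You only make implicit steps explicit; the constants match up to harmless factors absorbed into $c_{10}$.
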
 
 
\begin{proof}
We have 
\begin{equation*}
\begin{aligned}
\Big| \bigcup_{ a \in \widetilde{\mathcal{A}} } \mathcal{J}(a) \Big| &> 
\frac{1}{2} | \widetilde{ \mathcal{A} } | \cdot \left( c^{-1}_6 \rho^{-\frac{1}{2}(d  - 1)} \right)^{-1} \cdot c_9^{-1} \rho^{1/2} \\  
&> \frac{1}{2} \cdot \frac{1}{8} c_9^{-1} \rho^{-\frac{1}{2} d} \cdot c_6 \rho^{\frac{1}{2}(d  - 1)} \cdot c_9^{-1} \rho^{1/2} 
= \frac{1}{16} c_6 c^{-2}_{9}. 
\end{aligned}
\end{equation*}
\end{proof}

\subsection{Proof of Proposition \ref{nice_estimate}} 
We fix $\theta \in E$ for the rest of the section. 
For $a_1 \in \mathcal{A}_1$ and $\varphi \in (-\epsilon, \epsilon)$, 
define 
\begin{equation*}
\Lambda_{a_1, \varphi} = 
\left\{ a_2 \in \mathcal{A}_2 : \arg_{\theta} I^{ \varphi, 0 } (a_1 a_2)  \in E \right\} 
\end{equation*}
and 
\begin{equation*}
\Phi^{*}_{a_1} = \left\{ \varphi \in (-\epsilon, \epsilon) : | \Lambda_{a_1, \varphi} | > | \mathcal{A}_2 | / 4 \right\}. 
\end{equation*}
The following lemma is immediate. 
\begin{lem}
For any $a_1 \in \mathcal{A}_1$, 
we have $| \Phi^{*}_{a_1} | > \epsilon$. 
\end{lem}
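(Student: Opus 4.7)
The plan is a Fubini--Markov argument resting on a simple rotational observation. For fixed $a_1 \in \mathcal{A}_1$ and $a_2 \in \mathcal{A}_2$, the perturbed similarity $f^{(\varphi,0)}_{a_1}$ differs from $f_{a_1}$ by a rotation of its image square by angle $\varphi$ (and by zero translation perturbation, since $\gamma = 0$). Writing $\alpha_{a_i}$ for the rotation angles of $f_{a_i}$, the line $T^{(\varphi,0)}_{a_1 a_2}(\ell_\theta) = f_{a_2}^{-1} \circ (f^{(\varphi,0)}_{a_1})^{-1}(\ell_\theta)$ therefore has angle
\begin{equation*}
\arg_\theta I^{\varphi,0}(a_1 a_2) \;=\; \theta - \alpha_{a_1} - \varphi - \alpha_{a_2} \pmod{\pi},
\end{equation*}
so the map $\varphi \mapsto \arg_\theta I^{\varphi,0}(a_1 a_2)$ is an isometry on $(-\epsilon, \epsilon)$ (slope of magnitude $1$, regardless of orientation-preservation).

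Since $|[0,\pi) \setminus E| < \epsilon/2$ by the construction of $E$ in section \ref{Marstrand}, for each $a_2 \in \mathcal{A}_2$ the ``bad'' set
\begin{equation*}
B(a_2) \;:=\; \{\varphi \in (-\epsilon, \epsilon) : \arg_\theta I^{\varphi,0}(a_1 a_2) \notin E\}
\end{equation*}
has Lebesgue measure at most $\epsilon/2$. Summing in $a_2$ and applying Fubini,
\begin{equation*}
\int_{-\epsilon}^{\epsilon} |\mathcal{A}_2 \setminus \Lambda_{a_1, \varphi}|\, d\varphi
\;=\; \sum_{a_2 \in \mathcal{A}_2} |B(a_2)|
\;<\; \frac{|\mathcal{A}_2|\, \epsilon}{2}.
\end{equation*}

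The complement of $\Phi^*_{a_1}$ in $(-\epsilon,\epsilon)$ coincides with $\{\varphi : |\mathcal{A}_2 \setminus \Lambda_{a_1, \varphi}| \geq 3|\mathcal{A}_2|/4\}$, so by Markov's inequality its Lebesgue measure is at most $(|\mathcal{A}_2|\epsilon/2)/(3|\mathcal{A}_2|/4) = 2\epsilon/3$. Hence $|\Phi^*_{a_1}| \geq 2\epsilon - 2\epsilon/3 = 4\epsilon/3 > \epsilon$, as required. There is no real obstacle here; the only point meriting care is verifying that the rotation part of $f^{(\varphi,0)}_{a_1}$ differs from that of $f_{a_1}$ by exactly $\varphi$, which is immediate from the way the perturbation is defined (the image square is rotated by $\varphi$).
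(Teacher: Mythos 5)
Your proof is correct and follows essentially the same route as the paper: a first-moment (Fubini/Markov) bound on the number of $a_2 \in \mathcal{A}_2$ for which $\arg_{\theta} I^{\varphi,0}(a_1 a_2)$ lands in $E$, using that $\varphi \mapsto \arg_{\theta} I^{\varphi,0}(a_1 a_2)$ is measure-preserving so each ``bad'' set has measure less than $\epsilon/2$. You merely phrase the averaging in terms of the complement $\mathcal{A}_2 \setminus \Lambda_{a_1,\varphi}$ (and make explicit the isometry observation the paper leaves implicit), arriving at the slightly stronger bound $4\epsilon/3$.
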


\begin{proof}
By the construction of $E$, we have 
\begin{equation*}
\left| \{ \varphi \in (-\epsilon, \epsilon) : \arg_{\theta} I^{ \varphi, 0 }(a_1 a_2) \in E \} \right| > \frac{3}{2} \epsilon 
\end{equation*}
for all $a_2 \in \mathcal{A}_2$. 
Let $\psi$ be the sum, over $a_2 \in \mathcal{A}_2$, 
of the characteristic functions of 
$\{ \varphi \in (-\epsilon, \epsilon) : \arg I_{\theta}^{\varphi, 0}(a_1 a_2) \in E \}$. 
Note that $0 \leq \psi \leq | \mathcal{A}_2 |$ and 
\begin{equation*}
\varphi \in \Psi^{*}_{a_1} \iff \psi( \varphi ) > | \mathcal{A}_2 | / 4.
\end{equation*}   
Therefore, we have
\begin{equation*}
\begin{aligned}
\frac{3}{2} \epsilon \cdot | \mathcal{A}_2 | < 
\int_{(-\epsilon, \epsilon)} \psi &= \int_{\Phi^{*}_{a_1}} \psi + \int_{( -\epsilon, \epsilon ) \setminus \Phi^{*}_{a_1}} \psi \\
&< | \mathcal{A}_2 | | \Phi^{*}_{a_1} | + \frac{ | \mathcal{A}_2 | }{4} \cdot 2 \epsilon. 
\end{aligned}
\end{equation*}
The claim follows from this. 
\end{proof}

For $a_1 \in \mathcal{A}_1$, let  
\begin{equation*}
\Phi^{**}_{a_1} = \Phi^{*}_{a_1} \cap \{ \varphi \in (-\epsilon, \epsilon) : \arg_{\theta} I^{\varphi, 0}(a_1) \in E \}. 
\end{equation*}
By the above lemma, 
we have $| \Phi^{**}_{a_1} | > \epsilon / 2$. 
For $a_1 \in \mathcal{A}_1, a_2 \in \mathcal{A}_2$ and $\varphi \in \Phi^{**}_{a_1}$, we denote  
\begin{equation*}
J^{\varphi}( a_1 a_2 ) = \Pi_{\theta}( I^{ \varphi, 0 }(a_1 a_2) ), 
\end{equation*}
and for $a_1 \in \mathcal{A}_1$ and $\varphi \in \Phi^{**}_{a_1}$, write 
\begin{equation*}
J^{\varphi}(a_1) = \bigcup_{a_2 \in \Lambda_{a_1, \varphi} } J^{\varphi}( a_1 a_2 ). 
\end{equation*}
By Lemma \ref{proj}, we have 
\begin{equation*}
| J^{\varphi}(a_1) | > c_6 c_{10} \rho^{1/2}. 
\end{equation*} 

For $t \in \mathbb{R}$, let 
\begin{equation*}
\Phi_{a_1, t} = \left\{ \varphi \in \Phi_{a_1}^{**} : t \in J^{\varphi}(a_1) \right\}. 
\end{equation*}
Write 
\begin{equation*}
J_{a_1} = \left\{ t \in \mathbb{R} : \left| \Phi_{a_1, t} \right| > \frac{1}{2} c_6 c_9^{-1} c_{10} | \Phi^{**}_{a_1} | \right\}. 
\end{equation*}
Note that we have 
\begin{equation*}
\begin{aligned}
\frac{1}{2} c_{6} c_9^{-1} c_{10} | \Phi_{a_1}^{**} | &> \frac{1}{2} c_{6} c_9^{-1} c_{10} \cdot \frac{1}{2} \epsilon \\
&= c_7. 
\end{aligned}
\end{equation*}

\begin{lem}
We have 
\begin{equation*}
| J_{a_1} | > \frac{1}{2} c_6 c_{10} \rho^{1/2}.
\end{equation*} 
\end{lem}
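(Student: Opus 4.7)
The plan is a two-way Fubini argument applied to the planar set
$$S \ = \ \{(\varphi, t) \in \mathbb{R}^2 : \varphi \in \Phi^{**}_{a_1},\ t \in J^{\varphi}(a_1)\}.$$
Integrating first in $\varphi$ and using Lemma~\ref{proj}, which gives $|J^{\varphi}(a_1)| > c_6 c_{10}\,\rho^{1/2}$ for every admissible $\varphi$, I obtain the lower bound
$$|S| \ \geq \ c_6 c_{10}\,\rho^{1/2}\,|\Phi^{**}_{a_1}|.$$

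Integrating first in $t$ instead gives $|S| = \int |\Phi_{a_1,t}|\,dt$, and I split this over $J_{a_1}$ and its complement. On $J_{a_1}$ I use only the trivial bound $|\Phi_{a_1,t}| \leq |\Phi^{**}_{a_1}|$, and on the complement the defining condition of $J_{a_1}$ yields $|\Phi_{a_1,t}| \leq \tfrac{1}{2} c_6 c_9^{-1} c_{10}\,|\Phi^{**}_{a_1}|$. The structural input I need is that the support of $t \mapsto |\Phi_{a_1,t}|$, namely $\bigcup_{\varphi \in \Phi^{**}_{a_1}} J^{\varphi}(a_1)$, is contained in a single interval of length at most $c_9\,\rho^{1/2}$: each $J^{\varphi}(a_1) \subset \Pi_{\theta}(I^{\varphi,0}(a_1))$ is the projection of a square of side $\sim \rho^{1/2}$, and as $\varphi$ ranges over $(-\epsilon,\epsilon)$ the center of that projected square shifts by only $O(\epsilon\rho^{1/2})$, so all these intervals lie in a common window of length comparable to $\rho^{1/2}$ (enlarging $c_9$ by a harmless universal factor if necessary). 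This produces
$$|S| \ \leq \ |J_{a_1}|\,|\Phi^{**}_{a_1}| \ + \ c_9\,\rho^{1/2}\cdot \tfrac{1}{2} c_6 c_9^{-1} c_{10}\,|\Phi^{**}_{a_1}|.$$

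Combining the two bounds and dividing by $|\Phi^{**}_{a_1}|$ leaves
$$c_6 c_{10}\,\rho^{1/2} \ \leq \ |J_{a_1}| + \tfrac{1}{2} c_6 c_{10}\,\rho^{1/2},$$
which rearranges to the desired estimate. The arithmetic makes transparent the purpose of the otherwise peculiar factor $c_9^{-1}$ in the threshold defining $J_{a_1}$: it cancels the $c_9$ coming from the support-length bound, leaving the clean factor $\tfrac12$. The only genuinely nontrivial step is verifying the support-length bound for $t \mapsto |\Phi_{a_1,t}|$; but this is essentially a one-line observation about how the projection of a small rotated square depends on the rotation angle, so there is no real obstacle — the rest of the argument is a mechanical Fubini computation.
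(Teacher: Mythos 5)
Your proof is correct and follows essentially the same route as the paper: a Fubini/double-counting argument on the set $\{(\varphi,t): \varphi\in\Phi^{**}_{a_1},\ t\in J^{\varphi}(a_1)\}$, with the lower bound from Lemma~\ref{proj}, the split over $J_{a_1}$ and its complement, and the bound $c_9\rho^{1/2}$ on the length of the support of $t\mapsto|\Phi_{a_1,t}|$. The only difference is cosmetic: you make explicit the support-length observation that the paper uses silently in its final line.
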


\begin{proof}
Let us integrate the characteristic function of 
\begin{equation*}
\left\{ (t, \varphi) : t \in J^{\varphi}(a_1) \text{ for some } \varphi \in \Phi^{**}_{a_1} \right\}
\end{equation*}
over $\{ (t, \varphi) : t \in \mathbb{R}, \varphi \in ( -\epsilon, \epsilon ) \}$. By Fubini's theorem, we have
\begin{equation*}
\begin{aligned}
| \Phi^{**}_{a_1} | \cdot c_6 c_{10} \rho^{1/2} &< \int_{\varphi \in (-\epsilon, \epsilon)} \int_{t \in \mathbb{R}} \\ 
&= \int_{t \in \mathbb{R}} \int_{\varphi \in (-\epsilon, \epsilon)} \\ 
&= \int_{t \in J_{a_1}} \int_{\varphi \in (-\epsilon, \epsilon)}  + 
\int_{t \in \mathbb{R} \setminus J_{a_1} }  \int_{\varphi \in (-\epsilon, \epsilon) } \\
&< | J_{a_1} | | \Phi^{**}_{a_1} | + c_9 \rho^{1/2} \cdot \frac{1}{2} c_{6} c_{9}^{-1} c_{10} | \Phi^{**}_{a_i} |. 
\end{aligned}
\end{equation*}
The claim follows from this. 
\end{proof}

Let $\psi$ be the sum, over $( \mathcal{A}_1, \theta )$-good words 
$a_1 \in \mathcal{A}_1$, of the characteristic functions of 
$J_{a_1}$. Note that $\supp \psi \subset ( -1, 1)$ and  
\begin{equation*}
0 \leq \psi \leq c_{6}^{-1} \rho^{ -\frac{1}{2} (d - 1)}. 
\end{equation*}
Let $\mathcal{D} = \{  \psi \geq  c^2_{6} \rho^{ -\frac{1}{2} (d - 1)}  \}$.
Then we have 
\begin{equation*}
\begin{aligned}
| \mathcal{D} | \cdot c_{6}^{-1} \rho^{ -\frac{1}{2} (d - 1)} + 
2 \cdot  c^2_{6} \rho^{ -\frac{1}{2} (d - 1)} 
&\geq \int_{L} \psi + \int_{ (-1, 1) \setminus L } \psi  \\
&= \int \psi \\
&\geq \frac{1}{2} c_6 c_{10} \rho^{1/2} \cdot \frac{1}{2} \cdot \frac{1}{2} \cdot c_{9}^{-1} \rho^{-\frac{1}{2} d}. 
\end{aligned}
\end{equation*}
Take $c_6 > 0$ small enough so that 
\begin{equation*}
2 c_6^2 < \frac{1}{16} c_{6} c_{9}^{-1} c_{10}. 
\end{equation*}
holds. 
Then we obtain 
\begin{equation*}
| \mathcal{D} | \geq \frac{1}{16} c_{6} c_{9}^{-1} c_{10} \cdot c_{6} =: c_{8}. 
\end{equation*}
Since $\mathcal{D} \subset L(\theta)$, we have proved that 
\begin{equation*}
| L(\theta) | \geq c_{8}. 
\end{equation*}  
This concludes the proof of Proposition \ref{nice_estimate}.

\section{Proof of the key Proposition}\label{key_prop}

\subsection{Proof of Proposition \ref{key_lem}} 
In this section, we prove Proposition \ref{key_lem}. 
Fix $(\theta, t) \in \mathcal{L}^1$. 
Let $( \theta', t' ) \in \mathcal{L}^0$ be such that $| \theta - \theta' | < \rho$ and 
$| t - t' | < \rho$. 

Then, there exist mutually distinct words 
$\{ a_1^1, a_1^2, \cdots, a_1^N \} \subset \mathcal{A}_1$ and the sets 
$\Phi_{i} \subset (-\epsilon, \epsilon)$ with $| \Phi_{i} | > c_{7}$ ($i = 1, 2, \cdots, N$) 
such that for all $a^i_1$ and $\varphi' \in \Phi_{i}$, there exists $a_2^{i} \in \mathcal{A}_2$ such that 
\begin{equation*}
\arg_{ \theta' } I^{ \varphi', 0 }(a_1^i a_2^{i}) \in E \text{ \ and \ } 
| \pos_{( \theta', t' )} I^{ \varphi', 0 }(a_1^i a_2^{i}) | \leq 1. 
\end{equation*}

Let 
\begin{equation*}
\mathcal{A}_3 = \left\{ a^1_1, a^2_1, \cdots, a^N_1 \right\}.
\end{equation*}
Write 
\begin{equation*}
\begin{aligned}
\Omega &= \left( ( -\epsilon, \epsilon ) \times (-1, 1)^2 \right)^{ \mathcal{A}_3 } 
\times \left( ( -\epsilon, \epsilon ) \times (-1, 1)^2 \right)^{ \mathcal{A}_1 \setminus \mathcal{A}_3 } ,  \\
\underline{\omega} &= ( \underline{\omega}', \underline{\omega}'' ), \text{ and \ } 
 \underline{\omega}' = \left( \omega_1, \omega_2, \cdots, \omega_N \right). 
\end{aligned}
\end{equation*}
Denote $\omega_i = ( \varphi_i, r_i )$, where $\varphi_i \in (-\epsilon, \epsilon)$ and $r_i \in (-1, 1)^2$. 
By Fubini's Theorem, Proposition \ref{key_lem} follows from the following claim:

\begin{claim}
There exists $c'_2 > 0$ such that for any $a_1^i \in \mathcal{A}_3$, 
\begin{equation*}
\left|  \left\{  \omega_i : \exists a_2^i \in \mathcal{A}_2 \text{ such that } T_{a_1^i a_2^i}^{\omega_i}(\theta, t) \in \mathcal{L}^0 
 \right\}  \right| > c'_2. 
\end{equation*}
\end{claim}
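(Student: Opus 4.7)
The plan is to decouple the two coordinates of $\omega_i = (\varphi_i, r_i)$: first choose $\varphi_i$ in a subset of $(-\epsilon, \epsilon)$ of Lebesgue measure bounded below so that the angular image of $T^{\omega_i}_{a_1^i a_2^i}(\theta, t)$ lies in $E$; then, for each such $\varphi_i$, prove that a positive proportion of $r_i \in (-1, 1)^2$ sends the positional image into $L(\eta)$, where $\eta$ denotes the common angular image. Combining the two estimates via Fubini will yield the desired $c_2' > 0$.

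For the angular step, a direct computation with $T^{\omega_i}_{a_1^i a_2^i} = f_{a_2^i}^{-1} \circ (f^{\omega_i}_{a_1^i})^{-1}$ gives
\begin{equation*}
\arg_\theta I^{\varphi_i, r_i}(a_1^i a_2^i) = \theta - \phi_{a_1^i} - \varphi_i - \phi_{a_2^i},
\end{equation*}
which is independent of $r_i$ and differs from $\arg_{\theta'} I^{\varphi', 0}(a_1^i a_2^i)$ by $(\theta - \theta') - (\varphi_i - \varphi')$. Setting $\varphi' := \varphi_i + (\theta' - \theta)$ makes the two angles equal, so the former lies in $E$ whenever $\varphi' \in \Phi_i$, that is, whenever $\varphi_i \in \Phi_i - (\theta' - \theta)$. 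Because $|\theta - \theta'| < \rho \ll \epsilon$, the intersection with $(-\epsilon, \epsilon)$ has measure at least $c_7/2$. For each such $\varphi_i$ I take the corresponding $a_2^i := a_2^i(\varphi') \in \mathcal{A}_2$ provided by the definition of $\Phi_i$.

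For the positional step, fix $\varphi_i$ as above and write $\eta := \arg_\theta I^{\varphi_i, r_i}(a_1^i a_2^i) \in E$. The additional translation $r_i c_1 \rho$ in the definition of $f^{\omega_i}_{a_1^i}$ is amplified by $(r_{a_1^i} r_{a_2^i})^{-1} \sim \rho^{-1}$ under the two-step inverse, so projecting onto $\ell_\eta$ yields the affine formula
\begin{equation*}
\pos_{(\theta, t)} I^{\varphi_i, r_i}(a_1^i a_2^i) = \lambda \, \langle r_i, \, (\sin\theta, -\cos\theta) \rangle + C_i,
\end{equation*}
with $|\lambda| = c_1 \rho / (r_{a_1^i} r_{a_2^i}) \in [c_0^{-2} c_1, c_0^2 c_1]$. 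The constant $C_i = \pos_{(\theta, t)} I^{\varphi_i, 0}(a_1^i a_2^i)$ differs from the quantity $\pos_{(\theta', t')} I^{\varphi', 0}(a_1^i a_2^i) \in [-1, 1]$ by the effect of varying $(\theta, t)$ by $O(\rho)$ and $\varphi_i$ by $O(\rho)$, each amplified by $(r_{a_1^i} r_{a_2^i})^{-1} = O(\rho^{-1})$, so $|C_i|$ is bounded by an absolute constant. Taking $c_1$ sufficiently large, the image of $r_i \mapsto \pos$ over $r_i \in (-1, 1)^2$ is an interval of length comparable to $c_1$ that contains $(-1, 1) \supset L(\eta)$, and the preimage of $L(\eta)$ is a strip in $(-1, 1)^2$ of width $|L(\eta)|/|\lambda| \geq c_8/(c_0^2 c_1)$ whose transverse length is bounded below; hence its 2D Lebesgue measure is at least a positive constant times $c_8/c_1$.

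Multiplying these two lower bounds gives a uniform positive lower bound on the measure of the set of good $\omega_i$, of order $c_7 c_8 / c_1$, which determines $c_2'$. The main obstacle is the uniformity of the affine estimate in the preceding paragraph: one must verify that the gradient magnitude $|\lambda|$ stays of order $c_1$ (and does not shrink with $\rho$), and that $C_i$ stays of order $1$, uniformly in $a_1^i, a_2^i, \varphi_i$ and in the choice of $(\theta, t) \in \mathcal{L}^1$ within $\rho$ of $\mathcal{L}^0$. Both reduce to bookkeeping with the contraction-ratio bounds $c_0^{-1} \rho^{1/2} < r_a < c_0 \rho^{1/2}$ and the fact that $(f^{\omega_i}_{a_1^i})^{-1}$ acts on lines by a rotation-translation-dilation whose translation part contributes at most $O(1)$ after the $\rho^{-1}$ expansion has been applied to the $O(\rho)$ perturbations in $(\theta, t)$ and $\varphi_i$.
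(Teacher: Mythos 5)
Your proposal is correct and follows essentially the same route as the paper's proof: shift $\Phi_i$ by $\theta-\theta'$ to handle the angular coordinate, observe that $\pos_{(\theta,t)}I^{\varphi,0}(a_1^ia_2^i)$ differs from the $O(1)$ quantity $\pos_{(\theta',t')}I^{\varphi',0}(a_1^ia_2^i)$ by $O(\rho)\cdot O(\rho^{-1})=O(1)$, and then take $c_1$ large so that a definite proportion of the translations $\gamma\in(-1,1)^2$ lands the position in the target set, concluding by Fubini. Your version is somewhat more explicit about the affine dependence on $\gamma$ and correctly targets $L(\eta)$ for the renormalized angle $\eta$ (where the paper writes $L(\theta)$), but these are refinements of the same argument rather than a different one.
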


\begin{proof}[Proof of the claim]
Let $a_1^i \in \mathcal{A}_3$ and $\varphi' \in \Phi_i$. 
Let $\varphi = \varphi' + \theta - \theta'$. 
Let $a_2^i \in \mathcal{A}_2$ be such that 
\begin{equation*}
\arg_{ \theta' } I^{ \varphi', 0 }(a_1^i a_2^i) \in E \text{ \ and \ } 
| \pos_{( \theta', t' )} I^{ \varphi', 0 }(a_1^i a_2^{i}) | \leq 1. 
\end{equation*}
Therefore, 
\begin{equation*}
\arg_{\theta} I^{\varphi, \gamma} (a^{i}_1 a^{i}_2) \in E
\end{equation*}
for all $\gamma \in (-1, 1)^2$. It is easy to see that 
\begin{equation*}
\big| \pos_{( \theta, t )} I^{\varphi, 0} (a_1^{i} a_2^{i} ) - 
\pos_{( \theta' t' )} I^{\varphi', 0} ( a_1^i a_2^{i} )  \big| 
\end{equation*}
is of order $1$. 
Therefore, 
\begin{equation*}
\big| \pos_{( \theta, t )} I^{\varphi, 0} ( a_1^i a_2^i ) \big|
\end{equation*}
 is also of order $1$. 
It follows that, by taking $c_1 > 0$ large enough, we obtain 
\begin{equation*}
\left| \left\{ \gamma \in (-1, 1)^2 : \pos_{(\theta, t)} I^{\varphi, \gamma } ( a_1^{i} a_{2}^{i} ) 
\in L(\theta)  \right\} \right| > c_2''. 
\end{equation*}
Therefore, 
\begin{equation*}
\begin{aligned}
\left|  \left\{  \omega_i : \exists a_2^i \in \mathcal{A}_2 \text{ such that } T_{a_1^i a_2^i}^{\omega_i}(\theta, t) \in \mathcal{L}^0 
 \right\}  \right| &> | ( \Phi_i + \theta - \theta' ) \cap (-\epsilon, \epsilon) | \cdot c_2'' / 2 \\
 &> \frac{1}{2} | \Phi_i | \cdot \frac{1}{2} c_2'' =: c_2'. 
\end{aligned}
\end{equation*}
\end{proof}

\section*{Acknowledgements}
The author is grateful to Boris Solomyak for many helpful discussions and comments.

\end{document}